\documentclass[12pt]{amsart}
\usepackage[T1]{fontenc}
\usepackage[utf8]{inputenc}
\usepackage[margin=1in]{geometry}
\usepackage{amsmath, amsfonts, amssymb, amsthm}
\usepackage{etoolbox} 
\usepackage[shortlabels]{enumitem}
\usepackage{tikz}
\usetikzlibrary{decorations.pathreplacing,calc}
\usepackage{verbatim}
\usepackage{thmtools}
\usepackage[dvipsnames]{xcolor}
\usepackage{float}
\usepackage{natbib}
\usepackage{parskip}
\usepackage[normalem]{ulem}
\usepackage[pdfencoding=auto, psdextra]{hyperref}
\usepackage{xspace}

\usepackage{pgfplots}
\pgfplotsset{compat=1.18}
\usepackage[nameinlink]{cleveref}

\newcommand{\drawdomboard}[6][0]{
\begin{scope}[xshift=#1cm]

\draw[step=1cm] (0,0) grid (#2,#3);

\foreach \x/\y in {#4} {
    \fill[gray!25]
        ($(\x,\y) + (-2,-1)$) rectangle ($(\x,\y)$);
    \draw[black, very thick]
        ($(\x,\y) + (-2,-1)$) rectangle ($(\x,\y)$);
}

\foreach \x/\y in {#5} {
    \fill[gray!25]
        ($(\x,\y) + (-1,-2)$) rectangle ($(\x,\y)$);
    \draw[black, very thick]
        ($(\x,\y) + (-1,-2)$) rectangle ($(\x,\y)$);
}

\foreach \x/\y in {#6} {
        \fill[blue!25]
            ($(\x,\y) + (-1,-2)$) rectangle ($(\x,\y)$);
        \draw[black, very thick]
            ($(\x,\y) + (-1,-2)$) rectangle ($(\x,\y)$);
    }

\end{scope}
}

            \newcommand{\boardTwoByThree}[1]{
            \begin{tikzpicture}[scale=0.55, baseline=(current bounding box.center)]
              \def\W{3}
              \def\H{2}
              \draw[thick] (0,0) rectangle (\W,\H);
              \draw (0,1) -- (\W,1);
              \foreach \x in {1,2} \draw (\x,0) -- (\x,\H);
              #1
            \end{tikzpicture}
            }
            
            \newcommand{\hdom}[2]{
              \fill[gray!25] (#2,#1) rectangle ({#2+2},{#1+1});
              \draw[very thick] (#2,#1) rectangle ({#2+2},{#1+1});
            }
            \newcommand{\vdom}[1]{
              \fill[blue!25] (#1,0) rectangle ({#1+1},2);
              \draw[very thick] (#1,0) rectangle ({#1+1},2);
            }

\newcommand{\boardTwoByFive}[1]{
\begin{tikzpicture}[scale=0.5, baseline=(current bounding box.center)]
  \def\W{5}
  \def\H{2}
  \draw[thick] (0,0) rectangle (\W,\H);
  \draw (0,1) -- (\W,1);
  \foreach \x in {1,2,3,4} \draw (\x,0) -- (\x,\H);
  #1
\end{tikzpicture}
}

\newcommand{\TOPFIXED}{\hdom{1}{0}\hdom{1}{2}}

\newtheorem{theorem}{Theorem}[section]
\newtheorem{lemma}[theorem]{Lemma}
\newtheorem{remark}[theorem]{Remark}
\newtheorem{corollary}[theorem]{Corollary}
\newtheorem{proposition}[theorem]{Proposition}
\newtheorem{definition}[theorem]{Definition}
\newtheorem{conjecture}[theorem]{Conjecture}
\newtheorem{problem}[theorem]{Problem}

\numberwithin{equation}{section}

\newcommand{\qfibonom}[2]{
\left[\!\begin{matrix} #1 \\ #2 \end{matrix}\!\right]_{\mathcal{F}}
}
\newcommand{\fibonom}[2]{
\left(\!\begin{matrix} #1 \\ #2 \end{matrix}\!\right)_{\mathcal{F}}
}
\newcommand{\Tm}{\mathcal{T}_{m,2}}
\def\Z{\mathbb{Z}}
\newcommand{\fibocat}[2]{\operatorname{FCat}_{#1,#2}}
\newcommand{\qfibocat}[2]{[\operatorname{FCat}_{#1,#2}]_q}
\newcommand{{\pow}}{r}

\def\bmref#1{{\normalfont\hyperref[#1]{\ref*{#1}}}\xspace}

\title{Unimodality of $q$-Fibonomial coefficients for small cases}

\author[B. B. Connelly]{Brendan B. Connelly}
\address{Department of Mathematics, UCLA, Los Angeles, California}
\email{\href{mailto:brendanconnelly@ucla.edu}{brendanconnelly@ucla.edu}}

\author[E. Ito]{Ezekiel Ito}
\email{\href{mailto:ezekielito@ucla.edu}{ezekielito@ucla.edu}}

\author[T. C. Martinez]{Thomas C. Martinez}
\email{\href{mailto:thomasmart@ucla.edu}{thomasmart@ucla.edu}}

\author[O. Shevchenko]{Olha Shevchenko}
\email{\href{mailto:shevchenko@ucla.edu}{shevchenko@ucla.edu}}

\author[K. Yang]{Kacey Yang}
\email{\href{mailto:kaceyyang@ucla.edu}{kaceyyang@ucla.edu}}

\date{\today}

\begin{document}

\makeatletter
\@namedef{subjclassname@2020}{
  \textup{2020} Mathematics Subject Classification}
\makeatother

\subjclass[2020]{ 
  11B39, 05A30, 05A10
}

\keywords{Fibonacci, Fibonomial, unimodal polynomials, symmetric polynomials, $q$-analogs, tilings.}

\begin{abstract}
    Bergeron--Ceballos--K\"ustner introduced the $q$-Fibonomial coefficients \( \qfibonom{m+n}{n}\), gave a combinatorial interpretation of the $q$-Fibonomial coefficients via a weighted path-domino tiling model, and conjectured that these polynomials are unimodal. We prove the conjecture for $n\leq3$. For the $n=2$ case, we give a combinatorial proof of both unimodality and symmetry by defining a nearly symmetric saturated chain decomposition on the set of tilings. For all three cases, we give an algebraic proof. Finally, for the $n=3$ case, we establish a more general unimodality result for certain products of $q$-analogs and propose several related conjectures. 
\end{abstract}

\maketitle

\section{Introduction}

The $q$-Fibonomial coefficients, introduced by Bergeron--Ceballos--K\"ustner \cite{kustner}, are the Fibonacci analogs of the Gaussian polynomials. For $m,n \in \mathbb{N}$, define the $q$-Fibonomial as follows:
\begin{equation}\label{eq:qfibo}
\qfibonom{m+n}{n} := \frac{[F_{m+n}]^!_q}{[F_m]^!_q \cdot [F_n]^!_q},
\end{equation}
where $[F_n]^!_q :=\prod_{k=1}^n[F_k]_q$, and $[n]_q:=1+q+\dots+q^{n-1}$. \cite{kustner} shows $q$-Fibonomials are polynomials with non-negative integer coefficients by attaching $q$-weights to a variant of the path-domino tiling model of Sagan--Savage \cite{sagan2009combinatorial} described in \Cref{sec:background}.

The polynomials $\qfibonom{m+n}{n} = \sum_{k=0}^Nc_kq^k$ are symmetric, i.e., $c_k = c_{N-k}$, and Bergeron--Ceballos--K\"ustner further conjectured $\qfibonom{m+n}{n}$ are unimodal, i.e., $c_0\leq c_1\leq \dots \leq c_{M-1}\leq c_M\geq c_{M+1}\geq\dots\geq c_{N-1}\geq c_N$ for some $M$, mirroring the classical results for the Gaussian polynomials \cite{sylvester1878xxv,o1990unimodality}.

\begin{conjecture}[{\cite[Conjecture 2.5]{kustner}}]\label{conj:mbynunimodal}
    The polynomials $\qfibonom{m+n}{n}$ are unimodal.
\end{conjecture}
 
Our main result resolves the conjecture for $n \leq 3$:
 
\begin{theorem}\label{thm:mainthm}
    The polynomials $\qfibonom{m+1}{1}$, $\qfibonom{m+2}{2}$, and $\qfibonom{m+3}{3}$ are unimodal.
\end{theorem}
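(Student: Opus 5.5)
The plan is to treat each value of $n$ algebraically, starting from the definition \eqref{eq:qfibo}. The whole argument rests on one standard fact: a product of symmetric unimodal polynomials with nonnegative coefficients is again symmetric and unimodal.

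\emph{The cases $n=1$ and $n=2$.} Since $F_1=F_2=1$, the definition \eqref{eq:qfibo} collapses to
\[
\qfibonom{m+1}{1}=[F_{m+1}]_q
\qquad\text{and}\qquad
\qfibonom{m+2}{2}=[F_{m+1}]_q[F_{m+2}]_q .
\]
The first is trivially unimodal. The second is a product of two symmetric unimodal polynomials, so it is symmetric and unimodal by the standard fact. Its coefficients even form an explicit trapezoid: they rise by one, stay constant, then fall by one.

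\emph{The case $n=3$: reduction.} Here $[F_3]^!_q=[2]_q$, so
\[
\qfibonom{m+3}{3}=\frac{[F_{m+1}]_q[F_{m+2}]_q[F_{m+3}]_q}{[2]_q}.
\]
Recall that $F_k$ is even if and only if $3\mid k$. Hence exactly one of the three consecutive Fibonacci numbers is even; call it $c=2k$, and call the other two $a$ and $b$, both odd. Using $[2k]_q/[2]_q=[k]_{q^2}$, the polynomial becomes
\[
P:=[a]_q\,[b]_q\,[k]_{q^2}.
\]
This is symmetric, but the standard fact no longer applies, because $[k]_{q^2}$ has internal zeros. Moreover, a single factor does not repair this: for $a=3$ and $k$ large, the coefficients of $[a]_q[k]_{q^2}$ are $1,1,2,1,2,1,2,\dots$, which is not unimodal. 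So both odd factors must be used.

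\emph{The case $n=3$: the key lemma (main obstacle).} I would prove the following more general statement: for odd $a,b\ge 3$ (the small cases $m\le 2$ are checked by hand), the polynomial $[a]_q[b]_q[k]_{q^2}$ is unimodal. The steps I have in mind are:
\begin{enumerate}
\item Write $[a]_q=[2]_q[(a-1)/2]_{q^2}+q^{a-1}$.
\item Substitute to get
\[
P=[b]_q\,[2k]_q\,[(a-1)/2]_{q^2}+q^{a-1}[b]_q[k]_{q^2}.
\]
\item Alternatively, compute the coefficients of $[a]_q[k]_{q^2}$ directly. The coefficient of $q^j$ counts the $i\in[0,k-1]$ with $0\le j-2i\le a-1$. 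This sequence $s_j$ increases by steps of $0$ or $1$ to a plateau region, where it alternates between $(a-1)/2$ and $(a+1)/2$, and then decreases symmetrically.
\item Multiply by $[b]_q$, which takes sliding window sums of length $b$. The difference of consecutive coefficients of $P$ is $s_{j+1}-s_{j+1-b}$.
\item Since $b$ is odd, the indices $j+1$ and $j+1-b$ have the same parity. Given the explicit piecewise shape of $s$, this difference is $\ge 0$ for $j$ below the center and $\le 0$ above it.
\item Conclude unimodality of $P$, and hence of $\qfibonom{m+3}{3}$.
\end{enumerate}
The parity bookkeeping in step 5 is the delicate point. That is where the oddness of $b$, guaranteed by the Fibonacci parity pattern, is essential. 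I would therefore organize the proof by comparing $s$ separately on even and odd indices, since each parity class is unimodal on its own.
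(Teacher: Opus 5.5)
Your cases $n=1,2$ and your reduction of $n=3$ to $P=[a]_q[b]_q[k]_{q^2}$ match the paper. The gap is the key lemma itself. It is false that $[a]_q[b]_q[k]_{q^2}$ is unimodal for all odd $a,b\ge 3$: for $a=b=3$, $k=4$ the coefficients are $1,2,4,4,5,4,5,4,4,2,1$.

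The error is in step~5. Because $b$ is odd, $j+1$ and $j+1-b$ have \emph{opposite} parity, not the same. On the plateau $a-1\le i\le 2k-2$ one has $s_i=\frac{a+1}{2}$ for even $i$ and $s_i=\frac{a-1}{2}$ for odd $i$. Hence $s_{j+1}-s_{j+1-b}=-1$ whenever $j+1$ is odd and $a-1\le j+1-b<j+1\le 2k-2$.

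Such a $j$ lies in the first half of $P$, i.e.\ $j\le\frac{a+b}{2}+k-3$, if and only if $2k>a+b$ (take $j=a+b-2$). This is part~(2) of \Cref{lem:Ak-greater-than-Bk}. \Cref{lem:unimodal_iff_special_case} shows that for odd $a,b$ the inequality $2k\le a+b$ is genuinely necessary. So the oddness of $b$ is the obstruction, not the remedy: by \Cref{cor:divisionunimodal}, an even $a$ or $b$ would give unimodality for free. No parity-class bookkeeping can prove the lemma as you stated it.

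The missing idea is the size condition $2k\le a+b$, which the Fibonacci setting supplies. The even term $E=2k$ among $F_{m+1},F_{m+2},F_{m+3}$ is at most the sum of the two odd terms, because $F_{m+3}=F_{m+1}+F_{m+2}$.

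With this hypothesis added, your step~5 can be repaired, and it becomes exactly the paper's argument. Your difference $s_{j+1}-s_{j+1-b}$ is the quantity $A(j)-B(j)$ of \Cref{lem:Ak-greater-than-Bk}, with the paper's $c$ equal to your $k$. The repaired argument runs as follows:
\begin{enumerate}
\item In the first half, $j\le\frac{a+b}{2}+k-3$ together with $2k\le a+b$ forces $j+1-b\le a-2$.
\item Hence $s_{j+1-b}\le\frac{a-1}{2}$ never reaches the even-plateau value, so it is dominated by $s_{j+1}$ whenever $j+1$ lies on the rising part or plateau of $s$.
\item On the remaining range, where $s_{j+1}$ is already in its decreasing part, $s_{j+1}$ is non-increasing and $s_{j+1-b}$ is non-decreasing in $j$.
\item So it suffices to compare them at the endpoint $j=\frac{a+b}{2}+k-3$, which is what the paper does.
\end{enumerate}
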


Unimodality is often proved using log-concavity \cite[p.~500]{stanley1989log}. However, $q$-Fibonomial coefficients generally fail to be log-concave---$\qfibonom{3+3}{3}$ is not log-concave for example---so the recent machinery developed for such results \cite{anari2018log, adiprasito2018hodge, branden2020lorentzian, chan2024log} is not applicable. 

We instead prove \Cref{thm:mainthm} algebraically in \Cref{sec:product_unimodality}. The cases $n = 1, 2$ are immediate, while $n = 3$ reduces to the following characterization of unimodality for a three-term product of $q$-analogs, proved in \Cref{sec:product_unimodality} as \Cref{lem:unimodal_iff_special_case}.

\begin{proposition}
    For $a, b, c \in \mathbb{N}$, the polynomial $T(q) = [a]_q [b]_q [c]_{q^2}$ is symmetric and unimodal if and only if $2c \leq a + b$, or $a$ or $b$ is even.
\end{proposition}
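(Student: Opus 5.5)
The plan is to treat symmetry and unimodality separately, and to split the unimodality argument according to the parity of $a$ and $b$.

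\textbf{Symmetry and the even case.} Symmetry is immediate. Each of $[a]_q$, $[b]_q$ and $[c]_{q^2}$ is a palindromic polynomial with nonnegative coefficients, and a product of palindromic polynomials is palindromic. For unimodality I will use the standard fact that a product of symmetric unimodal polynomials with nonnegative coefficients is symmetric unimodal. Suppose $a$ (say) is even, $a=2s$. Then $[a]_q=(1+q)[s]_{q^2}$, so
\[
T(q)=[b]_q\,(1+q)\,P(q^2), \qquad P(x)=[s]_x[c]_x .
\]
Here $P$ is a product of two $x$-analogs, so it is symmetric unimodal with coefficients $p_0,p_1,\dots$. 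The polynomial $(1+q)P(q^2)$ then has coefficient sequence $p_0,p_0,p_1,p_1,\dots$, which is still symmetric unimodal. Multiplying by $[b]_q$ finishes this case.

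\textbf{Both $a,b$ odd: setting up the difference formula.} Assume without loss of generality $a\le b$, and let $t_k$ be the coefficients of the trapezoid $[a]_q[b]_q$. Set $\Delta_k=t_k-t_{k-1}$. Then $\Delta_k=1$ for $0\le k\le a-1$, $\Delta_k=0$ for $a\le k\le b-1$, $\Delta_k=-1$ for $b\le k\le a+b-1$, and $\Delta_k=0$ otherwise. The coefficients $c_m$ of $T$ therefore satisfy
\[
c_m-c_{m-1}=\sum_{i=0}^{c-1}\Delta_{m-2i}.
\]
So $c_m-c_{m-1}$ equals the number of points of the progression $m,m-2,\dots,m-2c+2$ lying in $[0,a-1]$, minus the number lying in $[b,a+b-1]$. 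By symmetry it suffices to show this is $\ge 0$ for all $m\le N/2$, where $N=a+b+2c-4$.

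\textbf{Both odd: the counting.} The positive window $[0,a-1]$ has odd length $a$, and the negative window $[b,a+b-1]$ has odd length $a$ too. A progression of step $2$ meets each window in either $(a\pm1)/2$ points, depending on the parity of $m$ relative to the window endpoints. Since $a$ and $b$ are both odd, $0$ and $b$ have different parities. So when the progression is long enough to pass entirely through both windows, it picks up $(a-1)/2$ points in one window and $(a+1)/2$ in the other, alternately by parity of $m$; this produces a difference of $-1$ at some $m$ before the centre.

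I will make this precise in two steps.
\begin{enumerate}
\item If $2c\le a+b$, then for $m\le N/2$ the bottom of the progression, $m-2c+2$, is at least as large as what is needed so that the progression never fully covers the negative window while under-covering the positive one. A direct case check on where $m-2c+2$ and $m$ fall relative to $0,a-1,b,a+b-1$ gives nonnegativity.
\item If $2c>a+b$, I will exhibit a violation explicitly. I take $m$ equal to $N/2$ or $N/2-1$, chosen of the parity making the progression hit $[b,a+b-1]$ in $(a+1)/2$ points and $[0,a-1]$ in $(a-1)/2$ points. The condition $2c>a+b$ is exactly what guarantees the progression reaches down past $0$, so that both windows are fully traversed. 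Then $c_m<c_{m-1}$ with $m\le N/2$, which contradicts unimodality.
\end{enumerate}

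\textbf{Main obstacle.} The hardest step is the bookkeeping in the odd case. I must pin down exactly which parity of $m$ near the centre is bad, and verify that the threshold $2c\le a+b$ is sharp, including the edge cases $a=1$ and $a=b$.
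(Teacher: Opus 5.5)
Your proposal is correct and is essentially the paper's argument: the even case is the paper's Lemma~\ref{lem:unimodal_an} with $r=2$. Your count of points $m-2i$ in $[0,a-1]$ versus $[b,a+b-1]$ is exactly the paper's $A(k)-B(k)$ with $k=m-1$, the only change being your witness (the odd one of $N/2,\,N/2-1$) in place of the paper's $m=a+b-1$. The step you leave as ``a direct case check'' is the content of Lemma~\ref{lem:Ak-greater-than-Bk}(1), and your stated reason for it is off: it is the top of the progression that matters, not the bottom ($m\le N/2$ forces $m\le a+b-2$, so the negative window is hit in at most $(a-1)/2$ points). The only delicate subcase is $m\ge b$ with $m-2c+2\ge 1$, where you need $\lfloor (a+2c-m-1)/2\rfloor\ge\lfloor (m-b+2)/2\rfloor$, which follows from $2m\le N=a+b+2c-4$.
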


After establishing relevant background in \Cref{sec:background}, we also give an independent combinatorial proof for $n=2$ using the weighted path-domino tiling model of \cite{kustner} in \Cref{sec:comboproof}. We accomplish this by constructing equivalence classes of tilings which may be grouped to have symmetric and unimodal $q$-weights

\Cref{sec:conc} records further conjectures supported by computational evidence and develops the connection to the $q$-Fibonacci analog of the rational Catalan numbers, or the (rational) $q$-FiboCatalan numbers. In particular, we observe that \Cref{conj:mbynunimodal} would imply non-negativity of these polynomials, a fact stated but, to our knowledge, not previously proved.

\section{Background}\label{sec:background}

\subsection{Fibonacci Facts}

The Fibonacci numbers $F_n$ are defined by the recurrence $F_n=F_{n-1}+F_{n-2}$ for $n\geq2$ with initial conditions $F_0=0$ and $F_1=1$.

We collect a few well-known Fibonacci identities (see, for example, \cite{benjamin}) that will be useful to us in future sections.

\begin{proposition}\label{prop:identities}
    For all $m,n\geq0$, we have
    \begin{enumerate}
        \item\label{prop:id1} $F_1 + F_2+F_3 + \dots + F_n =F_{n+2}-1$.
        \item\label{prop:id2} $F_1 + F_3 + \dots + F_{2n-1}=F_{2n}$.
        \item\label{prop:id3}$F_2 + F_4 + \dots + F_{2n}=F_{2n+1}-1$.
        \item\label{prop:id4} If $m$ divides $n$, then $F_m$ divides $F_n$.
    \end{enumerate}
\end{proposition}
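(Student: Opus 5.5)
All four identities are standard, and we prove them by induction on $n$ (and on the quotient $n/m$ for the divisibility statement), using only the recurrence $F_k=F_{k-1}+F_{k-2}$ and the initial values $F_0=0$, $F_1=1$.

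\textbf{Identity (1).} For $n=0$ the left side is the empty sum $0$, and $F_2-1=0$. For the inductive step, assume the identity for $n$. Then
\[
\sum_{k=1}^{n+1}F_k=(F_{n+2}-1)+F_{n+1}=F_{n+3}-1.
\]

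\textbf{Identities (2) and (3).} These are the same telescoping argument with step two. For (2), the case $n=0$ reads $0=F_0$. Assuming (2) for $n$,
\[
F_{2n}+F_{2n+1}=F_{2n+2}.
\]
For (3), the case $n=0$ reads $0=F_1-1$. Assuming (3) for $n$,
\[
(F_{2n+1}-1)+F_{2n+2}=F_{2n+3}-1.
\]
Alternatively, (3) follows from (1) and (2): the sum of the even-indexed terms equals $\sum_{k=1}^{2n}F_k-\sum_{k=1}^{n}F_{2k-1}=F_{2n+2}-1-F_{2n}=F_{2n+1}-1$.

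\textbf{Identity (4).} This is the only step that needs an auxiliary tool, namely the addition formula
\[
F_{a+b}=F_aF_{b+1}+F_{a-1}F_b \qquad (a\ge 1,\ b\ge 0).
\]
We prove it by induction on $b$. For $b=0$ it reads $F_a=F_a\cdot 1+0$. For $b=1$ it reads $F_{a+1}=F_a+F_{a-1}$. Assuming it for $b-1$ and $b$, adding the two instances gives the case $b+1$, because each side obeys the Fibonacci recurrence in $b$.

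Now write $n=km$. If $m=0$, then $n=0$ and $F_0=0$ is divisible by $F_0$ under the usual convention $0\mid 0$. Otherwise, induct on $k$. For $k=0$ we have $F_m\mid F_0=0$, and for $k=1$ the claim is trivial. Assuming $F_m\mid F_{km}$, apply the addition formula with $a=m$ and $b=km$:
\[
F_{(k+1)m}=F_mF_{km+1}+F_{m-1}F_{km}.
\]
Both terms are divisible by $F_m$, the second by the inductive hypothesis, so $F_m\mid F_{(k+1)m}$.

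The only mild obstacle is establishing the addition formula. Everything else is a one-line telescoping induction.
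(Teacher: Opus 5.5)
Your proof is correct. The three sums are routine inductions with the right base cases, and the divisibility claim follows from the addition formula $F_{a+b}=F_aF_{b+1}+F_{a-1}F_b$ (valid for $a\ge 1$). You also handle the degenerate cases $m=0$ and $k=0$ properly.

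There is nothing in the paper to compare it with: the authors give no proof and cite these as well-known identities from \cite{benjamin}. That reference proves identities of this kind by counting square--domino tilings of a strip, whereas your algebraic induction from the recurrence is the standard alternative route.
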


We will also be using Zeckendorf's theorem \cite{zeckendorf1972representations}: 

\begin{theorem}[\cite{zeckendorf1972representations}]\label{prop:zeckendorf}
    Every positive integer has a unique representation as a sum of non-consecutive Fibonacci numbers.
\end{theorem}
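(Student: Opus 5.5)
We prove existence by the greedy algorithm and uniqueness by a size bound coming from \Cref{prop:identities}. Throughout we use Fibonacci numbers with indices $k\geq 2$, so that $F_2=1, F_3=2, F_4=3,\dots$ are distinct. This avoids the trivial ambiguity $F_1=F_2$. ``Non-consecutive'' refers to indices in this list.

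\emph{Existence.} We induct on the positive integer $N$. Let $k\geq 2$ be the largest index with $F_k\leq N$, so $F_k\leq N<F_{k+1}$. If $N=F_k$ we are done. Otherwise set $N'=N-F_k$, which satisfies $0<N'<F_{k+1}-F_k=F_{k-1}$. By induction $N'$ is a sum of non-consecutive Fibonacci numbers. Each of them is at most $N'<F_{k-1}$, so all their indices are at most $k-2$. Adjoining $F_k$ therefore keeps the indices non-consecutive.

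\emph{Key bound.} Any sum of distinct, non-consecutive Fibonacci numbers with largest index $k\geq 2$ is strictly less than $F_{k+1}$. The largest such sum is $F_k+F_{k-2}+F_{k-4}+\cdots$, running down to index $2$ or $3$ according to parity. If $k=2j$, \Cref{prop:identities}\eqref{prop:id3} gives $F_2+F_4+\dots+F_{2j}=F_{2j+1}-1<F_{k+1}$. If $k=2j-1$, the sum is $F_3+F_5+\dots+F_{2j-1}$. By \Cref{prop:identities}\eqref{prop:id2} this equals $F_{2j}-F_1=F_{k+1}-1<F_{k+1}$.

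\emph{Uniqueness.} Suppose $N$ has two representations, and cancel the Fibonacci numbers they share. This leaves two non-consecutive representations $S$ and $T$ of the same positive integer with disjoint index sets. If both were empty, the original representations would coincide, so suppose $S$ is nonempty, with largest index $k$. Then $T$ cannot be empty, since its sum equals that of $S$, which is positive. Let $\ell$ be the largest index in $T$; by disjointness $\ell\neq k$, say $\ell<k$. The key bound gives $\sum T<F_{\ell+1}\leq F_k\leq\sum S$, a contradiction.

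The only real obstacle is the index bookkeeping. We must exclude $F_1$ and check the parity cases of the key bound against the exact forms of identities \eqref{prop:id2} and \eqref{prop:id3}. Beyond that the argument is routine.
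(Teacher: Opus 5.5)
The paper gives no proof of this statement. It is quoted from Zeckendorf and used only to justify \eqref{eq:tilingstrip}, so there is no argument of the authors to compare against. Your proposal is a correct, self-contained proof of the standard kind: greedy existence, and uniqueness via the bound that a non-consecutive sum with top index $k$ is less than $F_{k+1}$.

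Restricting to indices $k\ge 2$ is necessary for the statement to be literally true. Otherwise $F_1=F_2$ and $F_1+F_3=F_4$ give two representations each. It is also exactly the convention the paper's model produces: a horizontal domino in the $m\times 1$ strip ending at column $i$ has weight $q^{F_i}$ with $2\le i\le m$, and non-overlapping dominoes give non-consecutive indices.

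You assert without proof that $F_k+F_{k-2}+\cdots$ is the largest admissible sum. This needs one line. If the indices are $k=i_1>i_2>\dots>i_r\ge 2$ with gaps of at least $2$, then $i_s\le k-2(s-1)$. So each term is dominated by the corresponding term of the alternating chain, and there are no more terms than in that chain.

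Your route is also worth having over the bare citation in this paper's context. Your key bound is precisely the extra ingredient \eqref{eq:tilingstrip} silently needs. Uniqueness alone only shows that distinct tilings of the $m$-strip have distinct degrees. One also needs every degree to lie in $\{0,\dots,F_{m+1}-1\}$, which is your key bound with $k\le m$. Conversely, every $N<F_{m+1}$ is attained: your greedy step picks $k\le m$ because $F_k\le N<F_{m+1}$, so it produces a tiling of the strip. Together these give the bijection between tilings of the strip and $\{0,\dots,F_{m+1}-1\}$ directly.
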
 

\subsection{\texorpdfstring{$q$}{q}-Fibonomial Coefficients}

The Fibonomial coefficients were originally defined by Lucas \cite{lucas1878theorie}: for $m,n \in \mathbb{N}$, 
$$\fibonom{m+n}{n}:=\frac{F_{m+n}^!}{F_m^!F_n^!},$$
where $F_n ^!=F_n \cdot F_{n-1} \cdots F_{2}\cdot F_1$. Sagan--Savage \cite{sagan2009combinatorial} gave a combinatorial interpretation of these Fibonomial coefficients as enumerating certain path-domino tilings of an $m\times n$ rectangle, and \cite{kustner} introduced a $q$-weighting on this model that recovers the $q$-Fibonomial coefficients $\qfibonom{m+n}{n}$ as in \eqref{eq:qfibo}. We recall the construction.

Fix $m,n\in\mathbb{N}$. Choose a lattice path from $(0,0)$ to $(m,n)$, and tile the region above the path with squares and horizontal dominoes ($2\times 1$ rectangles) and the region below with squares and vertical dominoes ($1\times 2$ rectangles). We further require each tile immediately below a horizontal segment of the path to be a vertical domino. Assign weights to the tiles as follows:

\begin{center}

\begin{tikzpicture}[scale=0.7]
    \drawdomboard[-4.5]{1}{1}{}{}{};
    \node at (-4.9,0.5) {$w($};
    \node at (-2.7,0.5) {$)=1$,};
    \drawdomboard[-1]{2}{1}{2/1}{}{};
    \node at (-1.4,0.5) {$w($};
    \node at (2.3,0.5) {$)=q^{F_iF_j}$,};
    \drawdomboard[4.5]{1}{2}{}{1/2}{};
    \node at (4.1,1) {$w($};
    \node at (6.8,1) {$)=q^{F_iF_j}$,};
    \drawdomboard[9.5]{1}{2}{}{}{1/2};
    \node at (9.1,1) {$w($};
    \node at (11.9,1) {$)=q^{F_{i+1}F_j}$};
\end{tikzpicture}
\end{center}

where $(i,j)$ denotes the coordinates of the top right most corner of the tile, and the blue vertical domino represents the domino forced to lie immediately beneath the lattice path. Squares contribute weight 1 and are recoverable from the placement of dominoes, so we omit them from diagrams. 

The $q$-weight of the tiling $T$ is the product of the tiles in the path-domino tiling. We denote the set of all path-domino tilings of an $m\times n$ rectangle as $\mathcal{T}_{m,n}$. See \Cref{fig:pathdominotiling} for an example.

\begin{figure}
\begin{center}
\begin{tikzpicture}[scale=0.95]    
\draw[step=1cm] (0,0) grid (4,4);

\foreach \x/\y in {2/1} {
    \pgfmathsetmacro{\xa}{\x-2}
    \pgfmathsetmacro{\ya}{\y-1}
    \pgfmathsetmacro{\xb}{\x}
    \pgfmathsetmacro{\yb}{\y}
    \path[fill=gray!25, draw=black, thick]
        (\xa,\ya) rectangle (\xb,\yb);
}

\foreach \x/\y in {4/2} {
    \pgfmathsetmacro{\xa}{\x-1}
    \pgfmathsetmacro{\ya}{\y-2}
    \pgfmathsetmacro{\xb}{\x}
    \pgfmathsetmacro{\yb}{\y}
    \path[fill=gray!25, draw=black, thick]
        (\xa,\ya) rectangle (\xb,\yb);
}

\foreach \x/\y in {3/3,4/4} {
    \pgfmathsetmacro{\xa}{\x-1}
    \pgfmathsetmacro{\ya}{\y-2}
    \pgfmathsetmacro{\xb}{\x}
    \pgfmathsetmacro{\yb}{\y}
    \path[fill=blue!25, draw=black, thick]
        (\xa,\ya) rectangle (\xb,\yb);
}
    \node at (1,0.5) {\footnotesize$q^{F_2F_1}$};
    \node[font=\footnotesize] at (2.5,2) {$q^{F_4F_3}$};
    \node[font=\footnotesize] at (3.5,1) {$q^{F_4F_2}$};
    \node[font=\footnotesize] at (3.5,3) {$q^{F_5F_4}$};

    \node at (1,-0.5) {$F_1$};
    \node at (2,-0.5) {$F_2$};
    \node at (3,-0.5) {$F_3$};
    \node at (4,-0.5) {$F_4$};

    \node at (-0.5,1) {$F_1$}; 
    \node at (-0.5,2) {$F_2$}; 
    \node at (-0.5,3) {$F_3$}; 
    \node at (-0.5,4) {$F_4$}; 

    \draw[blue][very thick] (0,0) -- (2,0); 
    \draw[blue][very thick] (2,0) -- (2,3); 
    \draw[blue][very thick] (2,3) -- (3,3); 
    \draw[blue][very thick] (3,3) -- (3,4); 
    \draw[blue][very thick] (3,4) -- (4,4); 
    
\end{tikzpicture}
\end{center}
\caption{A path-domino tiling in $\mathcal{T}_{4,4}$ of weight $q^{25} = q^{F_2F_1}\cdot q^{F_4F_3}\cdot q^{F_5F_4}\cdot q^{F_4F_2}$. The lattice path is determined by the blue dominoes and is omitted in subsequent diagrams.}
\label{fig:pathdominotiling}
\end{figure}

\begin{theorem}[{\cite[Theorem 2.4]{kustner}}]
For $m,n\in\mathbb{N}$, the $q$-analog of the Fibonomial coefficient can be computed as
$$\qfibonom{m+n}{n} = \sum_{T \in \mathcal{T}_{m,n}}w(T).$$
\end{theorem}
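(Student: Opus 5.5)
We prove the formula by induction on $m+n$. The plan is to show that both sides satisfy the same two-term recurrence, derived algebraically for the $q$-Fibonomial and by a last-step decomposition for the tilings. The base cases are $m=0$ or $n=0$. There the right side of \eqref{eq:qfibo} equals $1$, and $\mathcal{T}_{m,n}$ consists of a single tiling of weight $1$: the path is straight, and there is only an empty or degenerate region with no forced dominoes.

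\textbf{Algebraic recurrence.} We use the identity $F_{m+n}=F_{m+1}F_n+F_mF_{n-1}$ together with the elementary $q$-identities $[a+b]_q=[a]_q+q^a[b]_q$ and $[ab]_q=[a]_q[b]_{q^a}$. These give
\[
[F_{m+n}]_q=[F_n]_q\,[F_{m+1}]_{q^{F_n}}+q^{F_{m+1}F_n}\,[F_m]_q\,[F_{n-1}]_{q^{F_m}}.
\]
Multiply by $[F_{m+n-1}]^!_q/([F_m]^!_q[F_n]^!_q)$. In the first term the factor $[F_n]_q$ cancels against the denominator, and in the second term $[F_m]_q$ cancels. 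This yields
\[
\qfibonom{m+n}{n}=[F_{m+1}]_{q^{F_n}}\qfibonom{m+n-1}{n-1}+q^{F_{m+1}F_n}[F_{n-1}]_{q^{F_m}}\qfibonom{m+n-1}{n}.
\]

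\textbf{Combinatorial recurrence.} Split $\mathcal{T}_{m,n}$ according to the last step of the lattice path into $(m,n)$.

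If the last step is vertical, the path passes through $(m,n-1)$. The entire top row lies above the path, so it is tiled independently by squares and horizontal dominoes. Every horizontal domino there has top-right corner $(i,n)$ and weight $q^{F_iF_n}$. What remains is an arbitrary tiling in $\mathcal{T}_{m,n-1}$.

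If the last step is horizontal, the entire rightmost column lies below the path. Its top cell is covered by the forced blue domino, of weight $q^{F_{m+1}F_n}$. The remaining $n-2$ cells are tiled freely by squares and vertical dominoes of weight $q^{F_mF_j}$. The rest of the board is a tiling in $\mathcal{T}_{m-1,n}$. One must check that the forced-domino condition along the path restricts correctly to the smaller board, which is immediate since the constraint is local to horizontal path segments.

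\textbf{Key lemma (the expected obstacle).} It remains to identify the strip generating functions: a strip of length $k$ whose dominoes weigh $q^{cF_j}$ (with $j$ the top or right coordinate of the domino) has generating function $[F_{k+1}]_{q^c}$. A tiling of such a strip corresponds to a set of domino end positions $j\in\{2,\dots,k\}$ with no two consecutive, so its weight is $q^{c\sum F_j}$. By Zeckendorf's theorem (\Cref{prop:zeckendorf}), the sums of non-consecutive Fibonacci numbers from $\{F_2,\dots,F_k\}$ take each value in $\{0,1,\dots,F_{k+1}-1\}$ exactly once. For the upper bound, the largest such sum is at most $F_{k+1}-1$ by \Cref{prop:identities}(\ref{prop:id2}),(\ref{prop:id3}). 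The number of such tilings is also $F_{k+1}$, which confirms there are no gaps.

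Applying the lemma, the top row (length $m$, $c=F_n$) contributes $[F_{m+1}]_{q^{F_n}}$, and the shortened column (length $n-2$, $c=F_m$) contributes $[F_{n-1}]_{q^{F_m}}$. The combinatorial recurrence therefore coincides with the algebraic one, and induction completes the proof. The step I expect to need the most care is this bijective use of Zeckendorf, in particular the indexing of which $F_j$ are available in each strip and the boundary cases $n=1$ or $m=1$. When $n=1$, the forced domino would not fit, so the horizontal-last-step term must be checked to vanish consistently with $[F_0]=0$.
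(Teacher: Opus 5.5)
The paper does not prove this statement. It is quoted from \cite[Theorem 2.4]{kustner}, and only the case $n=1$ is argued in the text, via the Zeckendorf argument behind \eqref{eq:tilingstrip}. Your proof is correct and complete.

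The identity
\[
[F_{m+n}]_q=[F_n]_q\,[F_{m+1}]_{q^{F_n}}+q^{F_{m+1}F_n}\,[F_m]_q\,[F_{n-1}]_{q^{F_m}}
\]
gives the right recurrence, and your last-step decomposition matches it term by term. The forced domino at $(m,n)$ contributes exactly the shift $q^{F_{m+1}F_n}$. The forced-domino condition is column-local, since each column lies under exactly one horizontal step of the path. Weights depend only on absolute coordinates, so they restrict unchanged to $\mathcal{T}_{m,n-1}$ and $\mathcal{T}_{m-1,n}$.

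Your boundary conventions are also the right ones. Horizontal steps at height $0$ impose nothing, which gives the base case $n=0$. A horizontal step at height $1$ admits no tiling, which is exactly why the second term vanishes for $n=1$, matching $[F_0]_q=0$.

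Your strip lemma is the same Zeckendorf argument the paper uses for \eqref{eq:tilingstrip}. If you want to avoid Zeckendorf, it also follows by conditioning on the last cell: $[F_{k+1}]_q=[F_k]_q+q^{F_k}[F_{k-1}]_q$ is an instance of the rule $[a+b]_q=[a]_q+q^a[b]_q$ you already use.
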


Notice that for $n = 1$, $\qfibonom{m+1}{1}$ is the sum of weights of the path-domino tilings of an $m\times 1$ rectangle (see \cite{sagan2009combinatorial}). The degree of the weight of each tiling is equal to a sum of non-consecutive Fibonacci numbers since only dominoes contribute weight, so \Cref{prop:zeckendorf} implies
\begin{equation}\label{eq:tilingstrip}
\qfibonom{m+1}{1} = \sum_{T \in \mathcal{T}_{m,1}}w(T)=[F_{m+1}]_q.
\end{equation}
The $n=1$ case of \Cref{thm:mainthm} follows immediately. 

\section{Symmetry and Unimodality of \texorpdfstring{$\qfibonom{m+2}{2}$}{[m+2][2]F}, Combinatorially}\label{sec:comboproof}

In this section we prove the case $n=2$ of \Cref{thm:mainthm} combinatorially.  We first define an equivalence relation on $\Tm$ and analyze its equivalence classes in \Cref{subsec:chain}, and then use this to prove symmetry and unimodality in \Cref{subsec:mby2combo}. 

\subsection{Partitioning \texorpdfstring{$\Tm$}{Tm2}} \label{subsec:chain}

    Define a map $\pi: \mathcal{T}_{m,2} \to \mathcal{T}_{m,2}$ as follows. 

    \begin{enumerate}[\normalfont(M1)]
    
    \item \label{move1} If there is a horizontal domino at $(2,1)$, remove it.

    \begin{center}
        \begin{tikzpicture}[scale=0.6, baseline=(current bounding box.center)]
          \begin{scope}
            \def\W{8}
            \def\H{2}
        
            \draw[thick] (0,0) rectangle (\W,\H);
            \foreach \x in {1,...,7} \draw (\x,0) -- (\x,\H);
            \draw (0,1) -- (\W,1);
        
            \fill[gray!25] (2,1) rectangle (4,2);
            \draw[very thick] (2,1) rectangle (4,2);
        
            \fill[gray!25] (6,1) rectangle (8,2);
            \draw[very thick] (6,1) rectangle (8,2);
        
            \fill[gray!25] (0,0) rectangle (2,1);
            \draw[very thick] (0,0) rectangle (2,1);
          \end{scope}
        
          \node at (9.3,1) {$\overset{\pi}{\longmapsto}$};
        
          \begin{scope}[xshift=10.8cm]
            \def\W{8}
            \def\H{2}
        
            \draw[thick] (0,0) rectangle (\W,\H);
            \foreach \x in {1,...,7} \draw (\x,0) -- (\x,\H);
            \draw (0,1) -- (\W,1);
        
            \fill[gray!25] (2,1) rectangle (4,2);
            \draw[very thick] (2,1) rectangle (4,2);
        
            \fill[gray!25] (6,1) rectangle (8,2);
            \draw[very thick] (6,1) rectangle (8,2);
          \end{scope}
        \end{tikzpicture}
        \end{center}
    
    \item \label{move2} Otherwise, if there are any horizontal dominoes in the bottom row, shift the leftmost domino left by one tile and add as many dominoes directly to the left of it as possible.

        \begin{center}
            \begin{tikzpicture}[scale=0.6, baseline=(current bounding box.center)]
              \begin{scope}
                \def\W{8}
                \def\H{2}
            
                \draw[thick] (0,0) rectangle (\W,\H);
                \foreach \x in {1,...,7} \draw (\x,0) -- (\x,\H);
                \draw (0,1) -- (\W,1);
            
                \fill[gray!25] (2,1) rectangle (4,2);
                \draw[very thick] (2,1) rectangle (4,2);
            
                \fill[gray!25] (6,1) rectangle (8,2);
                \draw[very thick] (6,1) rectangle (8,2);
            
                \fill[gray!25] (6,0) rectangle (8,1);
                \draw[very thick] (6,0) rectangle (8,1);
              \end{scope}
            
              \node at (9.3,1) {$\overset{\pi}{\longmapsto}$};
            
              \begin{scope}[xshift=10.8cm]
                \def\W{8}
                \def\H{2}
            
                \draw[thick] (0,0) rectangle (\W,\H);
                \foreach \x in {1,...,7} \draw (\x,0) -- (\x,\H);
                \draw (0,1) -- (\W,1);
            
                \fill[gray!25] (2,1) rectangle (4,2);
                \draw[very thick] (2,1) rectangle (4,2);
            
                \fill[gray!25] (6,1) rectangle (8,2);
                \draw[very thick] (6,1) rectangle (8,2);
            
                \fill[gray!25] (1,0) rectangle (3,1);
                \draw[very thick] (1,0) rectangle (3,1);
            
                \fill[gray!25] (3,0) rectangle (5,1);
                \draw[very thick] (3,0) rectangle (5,1);
            
                \fill[gray!25] (5,0) rectangle (7,1);
                \draw[very thick] (5,0) rectangle (7,1);
              \end{scope}
            \end{tikzpicture}
        \end{center}

    \item \label{move3} Otherwise, if there are vertical dominoes, rotate the leftmost domino occupying $(k,2)$ to occupy $(k, 1)$ and add as many dominoes directly to the left of it as possible.

        \begin{center}
            \begin{tikzpicture}[scale=0.6, baseline=(current bounding box.center)]
              \begin{scope}
                \def\W{8}
                \def\H{2}
            
                \draw[thick] (0,0) rectangle (\W,\H);
                \foreach \x in {1,...,7} \draw (\x,0) -- (\x,\H);
                \draw (0,1) -- (\W,1);
            
                \fill[gray!25] (2,1) rectangle (4,2);
                \draw[very thick] (2,1) rectangle (4,2);
            
                \fill[blue!25] (7,0) rectangle (8,2);
                \draw[very thick] (7,0) rectangle (8,2);
              \end{scope}
            
              \node at (9.3,1) {$\overset{\pi}{\longmapsto}$};
            
              \begin{scope}[xshift=10.8cm]
                \def\W{8}
                \def\H{2}
            
                \draw[thick] (0,0) rectangle (\W,\H);
                \foreach \x in {1,...,7} \draw (\x,0) -- (\x,\H);
                \draw (0,1) -- (\W,1);
            
                \fill[gray!25] (2,1) rectangle (4,2);
                \draw[very thick] (2,1) rectangle (4,2);
            
                \fill[gray!25] (0,0) rectangle (2,1);
                \draw[very thick] (0,0) rectangle (2,1);
            
                \fill[gray!25] (2,0) rectangle (4,1);
                \draw[very thick] (2,0) rectangle (4,1);
            
                \fill[gray!25] (4,0) rectangle (6,1);
                \draw[very thick] (4,0) rectangle (6,1);
            
                \fill[gray!25] (6,0) rectangle (8,1);
                \draw[very thick] (6,0) rectangle (8,1);
              \end{scope}
            \end{tikzpicture}

            \end{center}

    \item \label{move4} Otherwise, if the bottom row is completely empty, $\pi$ fixes the tiling.

    \begin{center}
        \begin{tikzpicture}[scale=0.6, baseline=(current bounding box.center)]
        \begin{scope}
          \def\W{8} 
          \def\H{2} 
        
          \draw[thick] (0,0) rectangle (\W,\H);
          \foreach \x in {1,...,7} \draw (\x,0) -- (\x,\H);
          \draw (0,1) -- (\W,1);
        
          \fill[gray!25] (2,1) rectangle (4,2);
          \draw[very thick] (2,1) rectangle (4,2);
        
          \fill[gray!25] (6,1) rectangle (8,2);
          \draw[very thick] (6,1) rectangle (8,2);
        \end{scope}
        \node at (9.3,1) {$\overset{\pi}{\longmapsto}$};
            
            \begin{scope}[xshift=10.8cm]
          \def\W{8} 
          \def\H{2} 
        
          \draw[thick] (0,0) rectangle (\W,\H);
          \foreach \x in {1,...,7} \draw (\x,0) -- (\x,\H);
          \draw (0,1) -- (\W,1);
        
          \fill[gray!25] (2,1) rectangle (4,2);
          \draw[very thick] (2,1) rectangle (4,2);

          \fill[gray!25] (6,1) rectangle (8,2);
          \draw[very thick] (6,1) rectangle (8,2);
        \end{scope}
        
        \end{tikzpicture}

    \end{center}
    \end{enumerate}

    We also define a partial inverse $\pi^*: \mathcal{T}_{m,2} \to \mathcal{T}_{m,2}$. 
    \begin{enumerate} 
        \item If $(2,1)$ does not contain a domino, add a horizontal domino there.
        \item Otherwise, if there exists a horizontal domino in the bottom row that can be shifted to the right, shift the leftmost domino that can be shifted to the right, and remove all the dominoes to the left of it. 
        \item Otherwise, if there exists a domino in the bottom row that can be rotated (i.e. there exists a horizontal domino occupying $(k, 1)$, and $(k,2)$ is empty, and there are no more horizontal dominoes to the right of that domino in the top row), rotate the rightmost domino and remove all the dominoes to the left of it on the bottom row. 
        \item If none of the above are possible, $\pi^*$ fixes the tiling.
    \end{enumerate}
    
\begin{remark}
    Note that $\pi(\pi^*(T)) = T$ if $\pi^*(T) \neq T$. Similarly, $\pi^*(\pi(T)) = T$ if $\pi(T) \neq T$.
\end{remark}
\begin{remark}\label{rmk:topunchanged}
    The top horizontal tiling remains unchanged under $\pi$ and $\pi^*$. 
\end{remark}    

We begin with the following observation about how $q$-degrees behave within a block.

\begin{lemma}\label{lem:decby1}
Let \(T\in\Tm\) with $\pi(T)\neq T$. Then $w(T) = q \cdot w(\pi(T))$.
\end{lemma}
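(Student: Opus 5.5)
The plan is to make the weight function on $\Tm$ completely explicit and then check \ref{move1}--\ref{move3} one at a time; \ref{move4} is excluded because $\pi(T)\neq T$. In each case the change in the exponent should collapse to $-1$ via the parity-sum identities \eqref{prop:id2} and \eqref{prop:id3} of \Cref{prop:identities}.

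\textbf{Step 1: the shape of a tiling in $\Tm$.} Every tile directly under a horizontal path segment must be a vertical domino, and such a domino has height $2$. So the lattice path can have horizontal steps only at height $0$ or height $2$: it runs along the bottom up to some column $k-1$, rises by $2$, and then runs along the top. Columns $k,\dots,m$ are therefore filled by blue dominoes. Columns $1,\dots,k-1$ are tiled row by row with squares and horizontal dominoes.

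\textbf{Step 2: the weights.} A horizontal domino in the bottom row whose top-right corner is $(i,1)$ has weight $q^{F_iF_1}=q^{F_i}$. A horizontal domino in the top row ending at $(i,2)$ has weight $q^{F_iF_2}=q^{F_i}$. A blue domino in column $c$ has weight $q^{F_{c+1}F_2}=q^{F_{c+1}}$. By \Cref{rmk:topunchanged}, top-row dominoes contribute equally to $w(T)$ and $w(\pi(T))$. So it suffices to show that the total exponent from the bottom row and the blue dominoes drops by exactly $1$.

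\textbf{Step 3: the moves.}
\begin{itemize}
\item Under \ref{move1}, removing the domino ending at $(2,1)$ lowers the exponent by $F_2=1$.
\item Under \ref{move2}, the leftmost bottom domino ends at some $i\geq 3$, since otherwise \ref{move1} would apply, and every bottom cell to its left is empty. Shifting it replaces $F_i$ by $F_{i-1}$. Padding to the left adds dominoes ending at $i-3,i-5,\dots$, stopping at $2$ or $3$. The new contribution is $F_{i-1}+F_{i-3}+\cdots$. If the last index is $2$, identity \eqref{prop:id3} gives $F_i-1$. If the last index is $3$, identity \eqref{prop:id2} gives $F_i-F_1=F_i-1$.
\item Under \ref{move3}, the bottom row is empty and the leftmost blue domino is in column $k$. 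Rotating it removes $F_{k+1}$. The new horizontal domino ending at $(k,1)$ and its padding contribute $F_k+F_{k-2}+\cdots$, stopping at $F_2$ or $F_3$. The same two identities give $F_{k+1}-1$.
\end{itemize}
In every case $\deg w(T)=\deg w(\pi(T))+1$, which proves the lemma.

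\textbf{Main obstacle.} The delicate point is bookkeeping rather than computation. One has to confirm that ``as many dominoes as possible'' really means a maximal alternating packing reaching column $1$ or $2$. In \ref{move2} this holds because the shifted domino was the leftmost in the bottom row. In \ref{move3} it holds because the bottom row was empty. One must also check the boundary cases where the padding is empty, namely $i=3$ or $k=2$, for which the sums still equal $F_i-1$ and $F_{k+1}-1$ respectively.
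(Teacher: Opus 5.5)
Your proposal is correct and is essentially the paper's proof. Both check moves (M1)--(M3) case by case and collapse the exponent change to $-1$ via \Cref{prop:identities}(2),(3). Your explicit description of $\Tm$ and of the weights (a blue domino in column $k$ weighs $q^{F_{k+1}}$), and your split on whether the packing ends at index $2$ or $3$, are if anything more careful than the paper's write-up, which swaps the parity labels in (M2) and shifts the index in (M3).

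One case is left implicit by both you and the paper: (M3) with $k=1$, i.e.\ the all-vertical tiling. There no domino can end at $(1,1)$, and, as \Cref{fig:fullexample} shows, $\pi$ simply deletes the column-$1$ domino of weight $q^{F_2}=q$. The degree still drops by exactly $1$, which matches your formula if the new terms are read as an empty sum.
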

\begin{proof}
    We consider each type of move separately.

    In move~\bmref{move1}, we obtain $\pi(T)$ by deleting a tile of weight $q^{F_1F_2} = q$, so $w(T) = q w(T')$.

   In move~\bmref{move2}, suppose the leftmost bottom row domino of $T$ occupies $(1,k)$. Then in order to obtain $\pi(T)$, we are replacing this tile of weight $q^{F_k}$ with the tiles of weight $q^{F_{k-1}}, q^{F_{k-3}},$ and so on up until $q^{F_3}$ or $q^{F_2}$, depending on the parity of $k$. If $k$ is odd, the product of the new terms is $q^{F_{k-1} + F_{k-3} + \ldots + F_3} = q^{F_k - 1}$ by \Cref{prop:identities}(2). Similarly, if $k$ is even, the product of the new terms is $q^{F_{k-1} + F_{k-3} + \ldots + F_2} = q^{F_k - 1}$ by \Cref{prop:identities}(3). In both cases, the total degree of the weight goes down by $1$.

    Finally, similarly to move~\bmref{move2}, move~\bmref{move3} replaces a tile of weight $q^{F_k}$ with the tiles of total weight $q^{F_{k-1} + F_{k-3} + \ldots} = q^{F_k - 1}$.

\end{proof}

Let $\sim$ denote the equivalence relation on $\Tm$ generated by $S \sim \pi(S)$. Equivalently, $S \sim T$ if and only if $S$ and $T$ have the same fixed point under iterated application of $\pi$ (which exists by \Cref{lem:decby1} and finiteness of $\Tm$). This partitions $\Tm$ into blocks. See \Cref{fig:fullexample,fig:singlechain} for examples.

\begin{figure}[t]
    \begin{center}
    \renewcommand{\arraystretch}{1.6}
    \begin{tabular}{c}
    
    \boardTwoByThree{ \vdom{0}\vdom{1}\vdom{2} }
    \;\ensuremath{\overset{\pi}{\longmapsto}}\;
    \boardTwoByThree{ \vdom{1}\vdom{2} }
    \;\ensuremath{\overset{\pi}{\longmapsto}}\;
    \boardTwoByThree{ \vdom{2}\hdom{0}{0} }\;
    \ensuremath{\overset{\pi}{\longmapsto}}\;\hspace{6.5cm}
    \\[1.5em]
    \hspace{3.5cm}
    \ensuremath{\overset{\pi}{\longmapsto}}\;
    \boardTwoByThree{ \vdom{2} }
    \;\ensuremath{\overset{\pi}{\longmapsto}}\;
    \boardTwoByThree{ \hdom{0}{1} }
    \;\ensuremath{\overset{\pi}{\longmapsto}}\;
    \boardTwoByThree{ \hdom{0}{0} }
    \;\ensuremath{\overset{\pi}{\longmapsto}}\;
    \boardTwoByThree{ }
    \\[2.5em]
    
    \boardTwoByThree{ \hdom{1}{0}\vdom{2}\hdom{0}{0} }
    \;\ensuremath{\overset{\pi}{\longmapsto}}\;
    \boardTwoByThree{ \hdom{1}{0}\vdom{2} }
    \;\ensuremath{\overset{\pi}{\longmapsto}}\;
    \boardTwoByThree{ \hdom{1}{0}\hdom{0}{1} }
    \;\ensuremath{\overset{\pi}{\longmapsto}}\;
    \boardTwoByThree{ \hdom{1}{0}\hdom{0}{0} }
    \;\ensuremath{\overset{\pi}{\longmapsto}}\;
    \boardTwoByThree{ \hdom{1}{0} }
    \\[2.5em]
    
    \boardTwoByThree{ \hdom{1}{1}\hdom{0}{1} }
    \;\ensuremath{\overset{\pi}{\longmapsto}}\;
    \boardTwoByThree{ \hdom{1}{1}\hdom{0}{0} }
    \;\ensuremath{\overset{\pi}{\longmapsto}}\;
    \boardTwoByThree{ \hdom{1}{1} }
    \\
    
    \end{tabular}
    \end{center}

\caption{Partitioning $\mathcal{T}_{3,2}$ into $F_3=3$ sets, cf. \Cref{lem:numchains}.}
\label{fig:fullexample}
\end{figure}

\begin{figure}[t]

\begin{center}
\renewcommand{\arraystretch}{1}
\scalebox{.9}{
\begin{tabular}{c}

$\boardTwoByFive{ \TOPFIXED \vdom{4}\hdom{0}{0}\hdom{0}{2} }
\;\overset{\pi}{\longmapsto}\;
\boardTwoByFive{ \TOPFIXED \vdom{4}\hdom{0}{2} }
\;\overset{\pi}{\longmapsto}\;
\boardTwoByFive{ \TOPFIXED \vdom{4}\hdom{0}{1} }
\;\overset{\pi}{\longmapsto}\;
\boardTwoByFive{ \TOPFIXED \vdom{4}\hdom{0}{0} }
\;\overset{\pi}{\longmapsto}\;
\boardTwoByFive{ \TOPFIXED \vdom{4} }
$
\\[1.5em]

$\qquad
\overset{\pi}{\longmapsto}\;
\boardTwoByFive{ \TOPFIXED \hdom{0}{1}\hdom{0}{3} }
\;\overset{\pi}{\longmapsto}\;
\boardTwoByFive{ \TOPFIXED \hdom{0}{0}\hdom{0}{3} }
\;\overset{\pi}{\longmapsto}\;
\boardTwoByFive{ \TOPFIXED \hdom{0}{3} }
\;\overset{\pi}{\longmapsto}\;
\boardTwoByFive{ \TOPFIXED \hdom{0}{0}\hdom{0}{2} }
$
\\[1.5em]

$\qquad
\overset{\pi}{\longmapsto}\;
\boardTwoByFive{ \TOPFIXED \hdom{0}{2} }
\;\overset{\pi}{\longmapsto}\;
\boardTwoByFive{ \TOPFIXED \hdom{0}{1} }
\;\overset{\pi}{\longmapsto}\;
\boardTwoByFive{ \TOPFIXED \hdom{0}{0} }
\;\overset{\pi}{\longmapsto}\;
\boardTwoByFive{ \TOPFIXED }
$
\\

\end{tabular}
}
\end{center}
\caption{A block of the partition on $\mathcal{T}_{5,2}$ .}
\label{fig:singlechain}
\end{figure}

We introduce some terminology to refer to elements in the blocks.

\begin{definition}
A tiling $S \in \Tm$ is \textbf{minimal} if $\pi(S) = S$, and \textbf{maximal} if $\pi^*(S) = S$.   
\end{definition}

The following two properties follow from the definitions of $\pi$ and $\pi^*$. 

\begin{lemma}\label{rk:startstructure} The minimal and maximal tilings admit the following description.
\begin{enumerate}
    \item The tiling $S$ is minimal if and only if $S$ has no vertical dominoes and no horizontal dominoes on the bottom row. 
    \item Let $T$ be a tiling. Let $(\alpha_T,2)$ be the position of the rightmost horizontal domino in the top row of $T$. Then $T$ is maximal if and only if every column on the right of $\alpha_T$ is filled with a vertical domino and the bottom row contains as many horizontal dominoes as possible, leaving a gap at position $(1,1)$ if $\alpha_T$ is odd.
\end{enumerate}
\end{lemma}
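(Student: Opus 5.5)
The plan is to read both parts directly off the case definitions of $\pi$ and $\pi^*$. A tiling is fixed by one of these maps exactly when none of its first three rules applies. So each part reduces to negating those rules and checking that the negation matches the stated description. First I will record the shape of a tiling in $\Tm$. The lattice path runs along the bottom edge to some column $k$, rises by $2$, and then runs along the top edge. It cannot run horizontally at height $1$, since the vertical domino forced beneath such a step would not fit. Hence columns $k+1,\dots,m$ are filled with vertical dominoes, and the $2\times k$ region to their left is tiled by squares and horizontal dominoes, with the two rows independent of each other. All later arguments refer to this normal form.

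For part (1), $\pi(S)=S$ means that none of (M1)--(M3) applies, i.e. there is no domino at $(2,1)$, no horizontal domino in the bottom row, and no vertical domino. The first condition is implied by the second. So $S$ is minimal if and only if it has no vertical dominoes and no bottom-row horizontal dominoes. Conversely, such an $S$ falls into case (M4), so this direction is immediate.

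For part (2), I will first show that the described tilings are fixed by $\pi^*$, checking its rules one at a time.
\begin{itemize}
\item Rule (1): in the packed bottom row, either $(2,1)$ is occupied, or (when $\alpha_T$ is odd) column $1$ is the only free cell, so no domino can be added at $(2,1)$. Here I read rule (1) as ``a domino can be placed at $(2,1)$.''
\item Rule (2): dominoes packed against the right end of the available bottom segment cannot be shifted right. They are blocked either by each other or by the vertical dominoes starting just after column $\alpha_T$.
\item Rule (3): every bottom-row domino lies weakly left of $\alpha_T$, so the top-row horizontal domino ending at $\alpha_T$ lies to its right, or the top cell above it is occupied. In either case rotation is forbidden.
\end{itemize}
Hence these tilings are maximal.

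For the converse, suppose $\pi^*(T)=T$. The first step is to show that every column to the right of $\alpha_T$ is a vertical domino. Suppose instead that some column right of $\alpha_T$ is not, so that the region between $\alpha_T$ and the first vertical domino contains top-row squares. If the bottom cells there contain a horizontal domino, then the rightmost such domino has an empty top cell and no top horizontal domino to its right, so rule (3) applies. If those bottom cells contain no horizontal domino, then either a domino further left can shift right into them (rule (2)), or all bottom cells to their left are free, in which case rule (1) or rule (2) applies. Each case contradicts maximality. Once the columns right of $\alpha_T$ are known to be vertical, the failure of rule (2) forces the bottom-row dominoes in columns $1,\dots,\alpha_T$ to be packed rightward. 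The failure of rule (1) then forces the packing to be as full as possible, which leaves a single gap at column $1$ exactly when $\alpha_T$ is odd.

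The main obstacle is this first step of the converse: organizing the case split so that rules (1)--(3) together really cover every non-packed configuration. I will handle it by induction on the number of ``defects'', meaning free bottom cells that could hold a domino together with non-vertical columns right of $\alpha_T$. The key claim is that whenever a defect exists, the rightmost one is always removable by one of the rules of $\pi^*$.
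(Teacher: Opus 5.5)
Your strategy is the natural one: normalize tilings, then negate the rules of $\pi$ and $\pi^*$. The normal form has the path along the bottom edge up to some column $k_0$, vertical dominoes in columns $k_0+1,\dots,m$, and two independent horizontal strip tilings of the $2\times k_0$ block. This matches the paper, which only asserts that the lemma follows from the definitions. Part (1) and the direction ``described $\Rightarrow$ $\pi^*$-fixed'' in part (2) are fine.

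The converse in part (2) has a genuine hole, in the sub-case with no bottom-row horizontal dominoes at all. There you say ``rule (1) or rule (2) applies''. But rule (2) needs a bottom-row horizontal domino, so only rule (1) is available, and it needs columns $1$ and $2$ both above the path, i.e.\ $k_0\ge 2$. Now take $k_0=1$: column $1$ is two squares and columns $2,\dots,m$ are vertical (for $m=1$ this is the empty tiling). Then none of rules (1)--(3) applies, so $\pi^*(T)=T$. Yet $T$ is not of the described form, since there is no top-row domino and column $1$ is not vertical. Concretely, for $m=3$ the tiling with vertical dominoes in columns $2,3$ is fixed by $\pi^*$, although it equals $\pi$ of the all-vertical tiling (first chain in the figure for $\mathcal{T}_{3,2}$). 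Your ``key claim'', that the rightmost defect is always removable by a rule of $\pi^*$, fails exactly here.

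So the converse is not just under-argued: with $\pi^*$ as literally defined, the statement is false. The cause is that $\pi^*$ does not invert the degenerate case $k=1$ of (M3). There, ``rotating'' the vertical domino in column $1$ simply deletes it, changing its weight from $q^{F_2}$ to $1$.

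To get a correct proof, first repair the definition. Either add to $\pi^*$ the rule ``if column $1$ is empty and columns $2,\dots,m$ are vertical, insert a vertical domino in column $1$'', or define maximal as ``$T\neq\pi(S)$ for all $S\neq T$''. Then split your no-bottom-domino case into $k_0\ge 2$ (rule (1) fires) and $k_0=1$ (the new rule fires).

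With that patch, the rest of your argument goes through. The rightmost bottom domino in the region right of $\alpha_T$ triggers rule (2) or (3); strictly, its rotation is legal only if it ends at $k_0$, but otherwise it can shift right, so $T$ is still not fixed. After that, failure of (2) packs the bottom row flush against $\alpha_T$. Failure of (1) then limits the leftover gap to the single cell $(1,1)$, which occurs exactly when $\alpha_T$ is odd.
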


\begin{figure}[t]

\begin{center}
            \begin{tikzpicture}[scale=0.6, baseline=(current bounding box.center)]

                \def\W{8}
                \def\H{2}
            
                \draw[thick] (0,0) rectangle (\W,\H);
                \foreach \x in {1,...,7} \draw (\x,0) -- (\x,\H);
                \draw (0,1) -- (\W,1);

                \fill[gray!25] (4,1) rectangle (6,2);
                \draw[very thick] (4,1) rectangle (6,2);
                
                \fill[gray!25] (1,1) rectangle (3,2);
                \draw[very thick] (1,1) rectangle (3,2);

                \fill[gray!25] (0,0) rectangle (2,1);
                \draw[very thick] (0,0) rectangle (2,1);
            
                \fill[gray!25] (2,0) rectangle (4,1);
                \draw[very thick] (2,0) rectangle (4,1);
            
                \fill[gray!25] (4,0) rectangle (6,1);
                \draw[very thick] (4,0) rectangle (6,1);

                \fill[blue!25] (6,0) rectangle (7,2);
                \draw[very thick] (6,0) rectangle (7,2);
            
                \fill[blue!25] (7,0) rectangle (8,2);
                \draw[very thick] (7,0) rectangle (8,2);

                \node at (6,-0.5) {$\alpha_T$};
                \node[font=\large] at (4,2.7) {$T$};
            \end{tikzpicture}

            \end{center}                
            \caption{Maximal tiling in a partition}
\label{fig:alphat}
\end{figure}

This characterization yields the following description of the equivalence classes

\begin{lemma} \label{lem:uniquetoprow}
    $S\sim T$ if and only if the top horizontal tilings of $S$ and $T$ coincide.
\end{lemma}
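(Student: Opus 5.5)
Two things have to be shown: the forward direction, that $S\sim T$ forces equal top rows, and the converse, that equal top rows force $S\sim T$.

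The forward direction is immediate from \Cref{rmk:topunchanged}. Each application of $\pi$ leaves the top row of horizontal dominoes unchanged. The relation $\sim$ is generated by $S\sim\pi(S)$, so any two equivalent tilings are joined by a finite zigzag of such moves. Therefore they share the same top horizontal tiling.

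For the converse I will use the fixed-point description of $\sim$. By \Cref{lem:decby1}, iterating $\pi$ on a tiling strictly lowers the $q$-degree until it stops. So every tiling $S$ reaches a unique minimal tiling $\pi^\infty(S)$, and $S\sim T$ exactly when $\pi^\infty(S)=\pi^\infty(T)$. It is then enough to show that a minimal tiling is determined by its top row.

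By \Cref{rk:startstructure}(1), a minimal tiling has no vertical dominoes and no horizontal dominoes in the bottom row. The bottom row therefore consists entirely of squares, and the whole tiling is fixed once its top-row horizontal dominoes are fixed. Now suppose $S$ and $T$ have the same top row. By \Cref{rmk:topunchanged}, $\pi^\infty(S)$ and $\pi^\infty(T)$ also have that same top row. Both are minimal, so they are equal, and hence $S\sim T$.

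The main point needing care is the claim that a minimal tiling with a given top row is a legitimate element of $\Tm$. If the top row is all horizontal dominoes and squares and the bottom row is all squares, the lattice path must run along the bottom edge, with no forced vertical dominoes. This is allowed: it is exactly the case-\bmref{move4} configuration. Checking that it conforms to the tiling rules will be the only real verification.

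As an alternative route to the same conclusion, I would check that each of \bmref{move1}–\bmref{move3} alters only bottom-row or vertical dominoes. Under this approach, no two distinct minimal tilings can share a top row, and every top row arises from exactly one minimal tiling.
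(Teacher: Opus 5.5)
Your proof is correct and follows the same route as the paper. The forward direction comes from \Cref{rmk:topunchanged}, and the converse comes from iterating $\pi$ to a fixed point which, by \Cref{rk:startstructure}(1), is determined by the preserved top row. Your appeal to \Cref{lem:decby1} for termination is slightly more careful than the paper's proof, which cites finiteness alone, and your ``legitimacy'' worry is moot because $\pi^\infty(S)\in\Tm$ by construction.
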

\begin{proof}
    The forward direction follows from \Cref{rmk:topunchanged}. For the converse, suppose $S$ and $T$ have the same top-row horizontal tiling. Since $\mathcal{T}_{m,2}$ is finite, repeated application of $\pi$ to any tiling eventually reaches a fixed point, which by Lemma~\ref{rk:startstructure}(1) is uniquely determined by the tiling of the top row. Since $S$ and $T$ share the same top row, they reach the same fixed point, so $S \sim T$.
\end{proof}

By \Cref{lem:uniquetoprow}, blocks are in bijection with horizontal tilings of an $m$-strip, of which there are $F_{m+1}$.

\begin{corollary}\label{lem:numchains}
There are $F_{m+1}$ blocks in the partition of \(\mathcal{T}_{m,2}\) induced by $\sim$. 
\end{corollary}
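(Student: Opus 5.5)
By \Cref{lem:uniquetoprow}, the blocks of the partition of $\Tm$ induced by $\sim$ are in bijection with the possible top-row horizontal tilings. So the plan is to count the admissible top rows. A top row consists of squares and non-overlapping horizontal dominoes placed in a $1\times m$ strip.

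Two points need checking before the count.

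\textbf{Every top row occurs.} I would verify that every such top-row pattern actually appears as the top row of some tiling in $\Tm$. The natural witness is the minimal tiling of \Cref{rk:startstructure}(1): the chosen top row, with no vertical dominoes and an empty bottom row. This corresponds to the lattice path that runs up the left edge and then across the top. The region below that path is then the whole rectangle, tiled only by squares in the bottom row. Since no horizontal path segment lies below any tile, there are no forced vertical dominoes. So this is a valid path-domino tiling, and every top row is realized.

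\textbf{Each block is determined by its top row.} Conversely, \Cref{lem:uniquetoprow} shows that two tilings lie in the same block exactly when their top rows agree. Hence blocks correspond bijectively to domino--square tilings of an $m$-strip.

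\textbf{The count.} It is standard that the number $t_m$ of square--domino tilings of a $1\times m$ strip satisfies $t_m=t_{m-1}+t_{m-2}$, by conditioning on the last tile, with $t_0=t_1=1$. Therefore $t_m=F_{m+1}$. Equivalently, one can set $q=1$ in \eqref{eq:tilingstrip}, since $\mathcal{T}_{m,1}$ is exactly the set of such strip tilings (each is the top row with the path along the bottom). Either way there are $F_{m+1}$ blocks.

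The only step needing care is confirming that the minimal tiling is legal for every top row, that is, that the forced-vertical-domino condition is vacuous for the path hugging the left and top edges. I expect this to be immediate from the description in \Cref{rk:startstructure}(1).
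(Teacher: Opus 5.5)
Your route is the paper's: \Cref{lem:uniquetoprow} identifies blocks with top-row horizontal tilings, and the strip tilings are counted by $F_{m+1}$. The paper does this in one line and leaves implicit that every top row occurs, so your surjectivity check is a worthwhile addition. However, the justification you give for it is wrong, because you have the orientation of the lattice path reversed.

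In this model, horizontal dominoes lie \emph{above} the path, in the upper-left region, and vertical dominoes lie below it, in the lower-right region. Every cell directly under a horizontal path segment must be a vertical domino. Now take a path running up the left edge and then across the top. It puts the whole rectangle below the path, so no horizontal domino may appear at all. Each top-row cell sits directly under a horizontal segment at height $2$, so each column is forced to be a vertical domino. That path yields only the all-vertical tiling $T_{\max}$, not your witness. For the same reason, your claim that ``no horizontal path segment lies below any tile'' is false for that path.

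The correct witness is the same tiling with the path running along the bottom edge from $(0,0)$ to $(m,0)$ and then up the right edge to $(m,2)$. The entire rectangle then lies above the path and is tiled by squares and horizontal dominoes: your chosen top row, and all squares in the bottom row. The only horizontal segments are at height $0$, with no cell beneath them, so the forced-vertical-domino condition is vacuous. This is the same shape you already used correctly for $\mathcal{T}_{m,1}$ (``path along the bottom''). With that correction the argument is complete.
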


\subsection{Weights of Minimal and Maximal Tilings} \label{subsec:mby2combo}

We describe the weights of minimal and maximal tilings. We begin with the minimal tilings.

\begin{corollary}\label{lem:uniquestart}
    The minimal tilings of $\Tm$ have weights $\{1,q,q^2,\dots,q^{F_{m+1}-1}\}$, each appearing exactly once.
\end{corollary}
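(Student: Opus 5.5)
By \Cref{rk:startstructure}(1), a tiling $S\in\Tm$ is minimal exactly when it has no vertical dominoes and an empty bottom row. Such an $S$ is therefore determined by an arbitrary tiling of the top row (an $m$-strip) by squares and horizontal dominoes, and every such top-row tiling arises. Since there are no vertical dominoes, the lattice path runs along the bottom edge and then up the right edge, so all of the top row lies above the path. The plan is to read off $w(S)$ from this top row and then apply \Cref{prop:zeckendorf}, exactly as in the derivation of \eqref{eq:tilingstrip}.

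First I will compute the weight. A horizontal domino in the top row whose top-right corner is at $(i,2)$ has weight $q^{F_iF_2}=q^{F_i}$, since $F_2=1$. Hence $w(S)=q^{\sum F_i}$, where the sum runs over the right endpoints $i$ of the top-row dominoes. Because dominoes do not overlap, consecutive right endpoints differ by at least $2$, and every endpoint satisfies $i\ge 2$. So the exponent is a sum of Fibonacci numbers $F_i$ with distinct, non-consecutive indices $i\in\{2,\dots,m\}$.

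Conversely, any set of non-consecutive indices in $\{2,\dots,m\}$ comes from exactly one top-row tiling: place a domino ending at each chosen $i$, which occupies cells $i-1$ and $i$. Non-consecutiveness guarantees these dominoes are disjoint, and $i\ge 2$ guarantees they fit inside the strip. So minimal tilings correspond bijectively to non-consecutive index sets in $\{2,\dots,m\}$, and there are $F_{m+1}$ of them. This matches \Cref{lem:numchains}, since each block contains exactly one minimal tiling.

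It remains to show that the resulting exponents are exactly $0,1,\dots,F_{m+1}-1$, each once. This is the step needing the most care, namely the use of Zeckendorf uniqueness. Zeckendorf representations use indices $\ge 2$, to avoid the duplicate $F_1=F_2$. By \Cref{prop:zeckendorf}, distinct non-consecutive index sets in $\{2,\dots,m\}$ give distinct sums, so the exponents are pairwise distinct. The largest possible sum is $F_m+F_{m-2}+\cdots$, which equals $F_{m+1}-1$ by \Cref{prop:identities}(2),(3), depending on the parity of $m$. So there are $F_{m+1}$ distinct values lying in $\{0,\dots,F_{m+1}-1\}$, and they must be exactly this set. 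Equivalently, this is \eqref{eq:tilingstrip}, since the top row is precisely a tiling in $\mathcal{T}_{m,1}$ with the same weights. That gives the claim.
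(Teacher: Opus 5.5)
Your proposal is correct and follows the paper's route. Both arguments use \Cref{rk:startstructure}(1) to reduce to top-row horizontal tilings, note that $F_2=1$ makes these weights coincide with those of $\mathcal{T}_{m,1}$, and conclude via Zeckendorf and \eqref{eq:tilingstrip}. The only difference is that you unpack \eqref{eq:tilingstrip} explicitly, which the paper leaves implicit: the bijection with non-consecutive index sets in $\{2,\dots,m\}$, distinctness of the sums, the maximum $F_{m+1}-1$, and pigeonhole.
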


\begin{proof}
By \Cref{rk:startstructure}, a minimal tiling has no bottom-row or vertical dominoes, so its weight is determined entirely by the horizontal tiling of its top row. Since the top-row dominoes of a minimal tiling occupy exactly the positions of a tiling of an $m$-strip, and $F_2=1$, the weights of the minimal tilings are the same as the weight of tilings of an $m$-strip. Thus, the result follows from $\sum_{T \in \mathcal{T}_{m,1}} w(T) = [F_{m+1}]_q $. 
\end{proof}

We prove the parallel statement for maximal tilings by first showing uniqueness of weights and then that they span the appropriate range.

\begin{lemma}\label{lem:uniquemax}
Let \(S\neq T\) be maximal tilings. Then \(w(S)\neq w(T)\).
\end{lemma}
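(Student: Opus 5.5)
The plan is to compute the weight of a maximal tiling explicitly, using the description in \Cref{rk:startstructure}(2). We will see that the degree is an injective function of the top-row tiling. Fix a maximal $T$ and let $\alpha=\alpha_T$ be the column where the rightmost top-row horizontal domino ends. If the top row has no horizontal domino we set $\alpha=0$.

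\textbf{Step 1: the contributions.} The degree of $w(T)$ splits into three parts.
\begin{itemize}
\item[(i)] Top row. A horizontal domino ending at $(j,2)$ has weight $q^{F_jF_2}=q^{F_j}$. So the top row contributes $Z_T=\sum F_j$ over the right endpoints $j$. These endpoints are pairwise non-adjacent, all are at least $2$, and the largest is $\alpha$. Hence $Z_T$ is a Zeckendorf-type sum with largest part $F_\alpha$, and $F_\alpha\le Z_T\le F_\alpha+F_{\alpha-2}+\cdots = F_{\alpha+1}-1$ by \Cref{prop:identities}(2),(3).
\item[(ii)] Bottom row, columns $1,\dots,\alpha$. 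These are packed with horizontal dominoes ending at $\alpha,\alpha-2,\dots$, which contributes $F_\alpha+F_{\alpha-2}+\cdots=F_{\alpha+1}-1$, again by \Cref{prop:identities}(2),(3).
\item[(iii)] Columns $\alpha+1,\dots,m$. Each holds a forced (blue) vertical domino, because the path runs horizontally along the top there. The one at column $i$ has weight $q^{F_{i+1}F_2}=q^{F_{i+1}}$. By \Cref{prop:identities}(1) these contribute $\sum_{i=\alpha+2}^{m+1}F_i=F_{m+3}-F_{\alpha+3}$.
\end{itemize}

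\textbf{Step 2: the degree formula.} Put $C=F_{m+3}-1$. Summing the three parts gives
\[
\deg w(T)=Z_T+F_{\alpha+1}-1+F_{m+3}-F_{\alpha+3}=C-F_{\alpha+2}+Z_T .
\]
Combining this with the range of $Z_T$ from (i), $\deg w(T)$ lies in the interval $I_\alpha=[\,C-F_{\alpha+1},\;C-F_\alpha-1\,]$. For $\alpha=0$ this is $I_0=\{C-1\}$, which matches the all-vertical tiling of weight $q^{F_{m+3}-2}$. The intervals $I_\alpha$ for different $\alpha$ are pairwise disjoint, since $I_{\alpha+1}$ ends exactly one below where $I_\alpha$ begins. (For $\alpha=0,1$ there is a degenerate check, because $I_1$ is empty and no top domino can end at column $1$.)

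\textbf{Step 3: injectivity.} Suppose $w(S)=w(T)$. Disjointness of the $I_\alpha$ forces $\alpha_S=\alpha_T$, and then the formula forces $Z_S=Z_T$. A top-row tiling is determined by the set of right endpoints of its dominoes. Zeckendorf's theorem (\Cref{prop:zeckendorf}) then says the sum $Z$ determines this set, because the parts are non-consecutive and have indices at least $2$, so $F_1$ versus $F_2$ causes no ambiguity. Hence $S$ and $T$ have the same top row. By \Cref{rk:startstructure}(2) a maximal tiling is determined by its top row, so $S=T$.

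The main obstacle is the bookkeeping in Step 1. The points that need care are the parity-dependent gap at $(1,1)$, the fact that every vertical domino right of $\alpha$ is a blue one carrying the shifted weight $q^{F_{i+1}}$, and the edge cases $\alpha\in\{0,2\}$. These must be checked carefully so that the intervals $I_\alpha$ come out exactly as stated.
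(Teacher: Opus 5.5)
Your proof is correct and follows the paper's argument: split on $\alpha$, use Zeckendorf (equivalently \eqref{eq:tilingstrip}) when $\alpha_S=\alpha_T$, and show the degrees are separated when $\alpha_S\neq\alpha_T$. The paper gets that separation by comparing extremal tilings for adjacent parameters $\alpha_T-1$ and $\alpha_T$, whereas your closed form $\deg w(T)=F_{m+3}-1-F_{\alpha+2}+Z_T$ produces the disjoint intervals $I_\alpha$ directly; this also handles the $\alpha=0$ versus $\alpha=2$ comparison, which the paper's reduction leaves implicit, and gives the range in \Cref{lem:minmaxend} for free.
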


\begin{proof}
Let $(\alpha_S, 2)$ and $(\alpha_T, 2)$ denote the positions of the rightmost horizontal dominoes in the top rows of $S$ and $T$ respectively. If $T$ does not have any dominoes in top row, we set $\alpha_T = 0$.

If $\alpha_S = \alpha_T$, then, by \Cref{rk:startstructure}, the bottom-row and vertical domino configurations of $S$ and $T$ are identical, so $S \neq T$  implies that their top-row tilings differ. Since distinct tilings of an $m$-strip have distinct weights by \eqref{eq:tilingstrip}, we conclude $w(S) \neq w(T)$.

Now suppose $\alpha_S \neq \alpha_T$. It  is enough to verify $\deg_qw(S) > \deg_qw(T)$ in the following case: let $\alpha_S +1 = \alpha_T$, let $S$ to be the  maximal tiling whose top row has a single horizontal domino ending at position  $\alpha_S = \alpha_T - 1$ (minimizing $\deg_qw(S)$ among tilings with parameter  $\alpha_S$), and $T$ to be the maximal tiling whose top row is filled with as many horizontal dominoes as possible ending at position $\alpha_T$ (maximizing $\deg_qw(T)$ among tilings with parameter $\alpha_T$), see \Cref{fig:st}. 

Using \Cref{rk:startstructure} and ignoring the common vertical  contributions to the right of $\alpha_T$, depending on the parity of $\alpha_T$, $\deg_qw(S)-\deg_qw(T)$ becomes: 
\begin{align*}
F_{\alpha_T+1}+F_{\alpha_T-1}+\bigl(F_{\alpha_T-1}+F_{\alpha_T-3}+\cdots+F_2\bigr)
\;&-\;
2\bigl(F_{\alpha_T}+F_{\alpha_T-2}+\cdots+F_3\bigr), & &\alpha_T \text{ odd},\\
F_{\alpha_T+1}+F_{\alpha_T-1}+\bigl(F_{\alpha_T-1}+F_{\alpha_T-3}+\cdots+F_3\bigr)
\;&-\;
2\bigl(F_{\alpha_T}+F_{\alpha_T-2}+\cdots+F_2\bigr) , & &\alpha_T \text{ even}.
\end{align*}
By \Cref{prop:identities}(2,3), both expressions evaluate to $(2F_{\alpha_T+1} - 1) - (2F_{\alpha_T+1} - 2)>0$.
\end{proof}

\begin{figure}[t]
        \begin{center}
            \begin{tikzpicture}[scale=0.6, baseline=(current bounding box.center)]
              \begin{scope}
                \def\W{8}
                \def\H{2}
            
                \draw[thick] (0,0) rectangle (\W,\H);
                \foreach \x in {1,...,7} \draw (\x,0) -- (\x,\H);
                \draw (0,1) -- (\W,1);
            
                \fill[gray!25] (3,1) rectangle (5,2);
                \draw[very thick] (3,1) rectangle (5,2);

                \fill[gray!25] (1,0) rectangle (3,1);
                \draw[very thick] (1,0) rectangle (3,1);
            
                \fill[gray!25] (3,0) rectangle (5,1);
                \draw[very thick] (3,0) rectangle (5,1);
            
                \fill[blue!25] (5,0) rectangle (6,2);
                \draw[very thick] (5,0) rectangle (6,2);
                \fill[blue!25] (6,0) rectangle (7,2);
                \draw[very thick] (6,0) rectangle (7,2);
                \fill[blue!25] (7,0) rectangle (8,2);
                \draw[very thick] (7,0) rectangle (8,2);

                \node at (5,-0.5) {$\alpha_S$};
                \node at (6,-0.5) {$\alpha_T$};

                \node[font=\large] at (4,2.7) {$S$};
              \end{scope}

              \begin{scope}[xshift=10.8cm]
                \def\W{8}
                \def\H{2}
            
                \draw[thick] (0,0) rectangle (\W,\H);
                \foreach \x in {1,...,7} \draw (\x,0) -- (\x,\H);
                \draw (0,1) -- (\W,1);

                \fill[gray!25] (4,1) rectangle (6,2);
                \draw[very thick] (4,1) rectangle (6,2);
                
                \fill[gray!25] (2,1) rectangle (4,2);
                \draw[very thick] (2,1) rectangle (4,2);
                
                \fill[gray!25] (0,1) rectangle (2,2);
                \draw[very thick] (0,1) rectangle (2,2);
            
                \fill[gray!25] (0,0) rectangle (2,1);
                \draw[very thick] (0,0) rectangle (2,1);
            
                \fill[gray!25] (2,0) rectangle (4,1);
                \draw[very thick] (2,0) rectangle (4,1);
            
                \fill[gray!25] (4,0) rectangle (6,1);
                \draw[very thick] (4,0) rectangle (6,1);

                \fill[blue!25] (6,0) rectangle (7,2);
                \draw[very thick] (6,0) rectangle (7,2);
            
                \fill[blue!25] (7,0) rectangle (8,2);
                \draw[very thick] (7,0) rectangle (8,2);
                \node at (5,-0.5) {$\alpha_S$};
                \node at (6,-0.5) {$\alpha_T$};
                \node[font=\large] at (4,2.7) {$T$};
              \end{scope}
            \end{tikzpicture}
\caption{If $\alpha_S<\alpha_T$, then $\deg_qw(S)>\deg_qw(T)$.}
\label{fig:st}
            \end{center}
\end{figure}

\begin{lemma}\label{lem:minmaxend}
    Fix \(m\ge 1\). The smallest possible \(q\)-degree of a maximal tiling in $\Tm$ is $F_{m + 2} - 1$. The largest possible \(q\)-degree of a maximal tiling in $\Tm$ is $F_{m + 3} - 2$.

\end{lemma}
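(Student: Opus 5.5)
The plan is to compute the $q$-degree of every maximal tiling explicitly from the description in \Cref{rk:startstructure}(2), organized by the parameter $\alpha_T$. Then I will use the ordering between the degree ranges for different values of $\alpha_T$ established in the proof of \Cref{lem:uniquemax}. The weights are read off from the tile weights. A top-row domino whose right end is in column $k$ contributes $q^{F_kF_2}=q^{F_k}$, and a bottom-row horizontal domino ending in column $k$ contributes $q^{F_kF_1}=q^{F_k}$. The vertical dominoes in columns $j>\alpha_T$ lie directly beneath the horizontal top segment of the lattice path, so they are the forced (blue) ones and contribute $q^{F_{j+1}F_2}=q^{F_{j+1}}$.

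First, I record the ordering of the degree ranges. The computation in the proof of \Cref{lem:uniquemax} shows that the minimal degree among maximal tilings with parameter $\alpha-1$ strictly exceeds the maximal degree among those with parameter $\alpha$. Hence the degree ranges for different values of $\alpha_T$ are disjoint and strictly decreasing in $\alpha_T$. The value $\alpha_T=1$ cannot occur, since a top domino ends in column at least $2$, so the comparison between $\alpha_T=0$ and $\alpha_T=2$ must be checked separately. This is a one-line computation done below. Granting the ordering, the global maximum is attained at $\alpha_T=0$ and the global minimum at $\alpha_T=m$.

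For the largest degree, $\alpha_T=0$ gives a unique maximal tiling: every column carries a forced vertical domino and there are no horizontal dominoes. Its degree is $F_2+F_3+\cdots+F_{m+1}=F_{m+3}-2$ by \Cref{prop:identities}(1). For comparison, a tiling with $\alpha_T=2$ has degree $F_2+F_2+(F_4+\cdots+F_{m+1})=F_{m+3}-3$, which is smaller, so the ordering also holds across the missing value $\alpha_T=1$.

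For the smallest degree, take $\alpha_T=m$ (assuming $m\ge2$). There are no vertical dominoes, and the bottom row is packed with horizontal dominoes ending at columns $m,m-2,\dots$. Its contribution is $F_m+F_{m-2}+\cdots=F_{m+1}-1$ by \Cref{prop:identities}(2) or (3), according to parity. Within this class the top-row degree is minimized by a single domino ending at column $m$, contributing $F_m$. This is the minimum among strip tilings whose last domino ends at $m$, since all weights are nonnegative. The total is $F_m+F_{m+1}-1=F_{m+2}-1$.

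The degenerate case $m=1$ is checked by hand: only $\alpha_T=0$ occurs, and its degree $F_2=1$ equals both $F_3-1$ and $F_4-2$. The main obstacle is making sure the ordering argument from \Cref{lem:uniquemax} really applies to all consecutive parameter values, including the gap at $\alpha_T=1$ and the boundary value $\alpha_T=m$. If needed, I would redo that telescoping comparison with general endpoints rather than rely on it.
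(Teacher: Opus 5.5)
Your proposal is correct and follows essentially the paper's argument. You use the ordering of degree ranges from the proof of \Cref{lem:uniquemax} to place the minimizer at $\alpha_T=m$ with a single top-row domino, giving degree $F_m+F_{m+1}-1=F_{m+2}-1$, and you identify the all-vertical tiling as the maximizer, with degree $F_{m+3}-2$ by \Cref{prop:identities}(1). Your explicit checks of $\alpha_T=0$ against $\alpha_T=2$ and of the case $m=1$ are slightly more careful than the paper. The paper instead justifies the maximum by noting that the all-vertical tiling has the largest degree of any tiling, and its displayed formula for $T_{\min}$ does not literally apply when $m=1$.
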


\begin{proof} 

Let $T_{\min}$ denote the maximal tiling with smallest possible degree. From the proof of \Cref{lem:uniquemax}, to minimize the \(q\)-degree of a maximal tiling, the rightmost horizontal domino in the top row of $T_{\min}$ must occupy $(m,2)$. Thus, the smallest $q$-degree at the end of a chain occurs when the top row contains exactly one horizontal domino which occupies $(m,2)$.

In that case, we obtain
\[
\deg_q w(T_{\min})=
\begin{cases}
2F_{m}+F_{m-2}+\cdots+F_2, & m \text{ even},\\[4pt]
2F_{m } +F_{m-2}+\cdots+F_3, & m \text{ odd}.
\end{cases}
\]
Using \Cref{prop:identities}(2,3), we obtain \(\deg_q w(T_{\min}) = F_{m + 1} + F_m - 1 = F_{m + 2} - 1\).

Let $T_{\max}$ be the tiling consisting solely of vertical dominoes. By \Cref{rk:startstructure}, $T_{\max}$ is a maximal tiling. We have
\[
    \deg_qw(T_{\max}) =F_{m+1}+\dots+F_3+F_2 = F_{m+3}-2,
\]
by \Cref{prop:identities}(1). This is clearly the largest $q$-degree of a maximal tiling as it is the largest $q$-degree of any tiling in $\Tm$.
\end{proof}

\subsection{Proof of Symmetry and Unimodality}

We are now ready to find $\qfibonom{m+2}{2}$.

\begin{theorem}\label{thm:closedform}
Fix \(m\ge 1\) and write
\[
\qfibonom{m+2}{2} = \sum_{k=0}^{F_{m+3}-2} a_k\, q^k,
\quad\text{where}\quad
a_k = \begin{cases} k+1 & 0 \le k \le F_{m+1}-1, \\ F_{m+1} & F_{m+1} \le k \le F_{m+2}-2, \\ F_{m+3}-k-1 & F_{m+2}-1 \le k \le F_{m+3}-2. \end{cases}
\]

\end{theorem}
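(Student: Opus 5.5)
The plan is to use the partition of $\Tm$ into blocks from \Cref{subsec:chain}. By \Cref{lem:decby1}, each block is a chain: starting from its maximal element and applying $\pi$ repeatedly, the degree drops by exactly one at each step until the minimal element is reached. Hence a block with minimal tiling of degree $s$ and maximal tiling of degree $e$ contributes exactly $q^s+q^{s+1}+\dots+q^{e}$.

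One point needs care: we must know that each block is a single chain from its unique maximal element down to its unique minimal element. Uniqueness of the minimal element is \Cref{lem:uniquetoprow}. For the maximal element, note that $\pi$ is injective on non-fixed points, by the remark $\pi^*(\pi(T))=T$. So the block is a rooted tree directed toward the fixed point, in which every non-minimal node has a unique predecessor under $\pi^*$ whenever $\pi^*$ moves it. Any node that is not in the image of $\pi$ is maximal. The block is then a single path provided every tiling $T$ with $\pi^*(T)\neq T$ satisfies $\pi(\pi^*(T))=T$; this is the first remark. Consequently, within a block, following $\pi^*$ from the minimal element yields a path ending at a maximal element $M$. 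Every tiling in the block maps under iterated $\pi$ to the minimal element, and since $\pi$ is injective off fixed points, any two elements at the same distance from the root coincide. Therefore the block is exactly that path.

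Consequently,
\[
\qfibonom{m+2}{2}=\sum_{\text{blocks } B} \bigl(q^{s_B}+\dots+q^{e_B}\bigr).
\]
By \Cref{lem:uniquestart}, the start degrees $s_B$ are exactly $0,1,\dots,F_{m+1}-1$. By \Cref{lem:uniquemax}, the end degrees $e_B$ are $F_{m+1}$ distinct integers. By \Cref{lem:minmaxend}, they lie in $[F_{m+2}-1,\,F_{m+3}-2]$, an interval of length $F_{m+3}-2-(F_{m+2}-1)+1=F_{m+1}$. Hence the end degrees are exactly $F_{m+2}-1,\dots,F_{m+3}-2$.

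It then suffices to count, for each $k$, the blocks with $s_B\le k\le e_B$. Every $s_B\le F_{m+1}-1\le F_{m+2}-2<e_B$, so each interval is nonempty and the conditions on starts and ends decouple:
\[
a_k=\#\{s_B\le k\}-\#\{e_B<k\}.
\]
For $k\le F_{m+1}-1$, this gives $(k+1)-0$. For $F_{m+1}\le k\le F_{m+2}-2$, it gives $F_{m+1}-0$. For $k\ge F_{m+2}-1$, it gives $F_{m+1}-(k-F_{m+2}+1)=F_{m+3}-k-1$. This uses $F_{m+1}+F_{m+2}=F_{m+3}$.

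The main obstacle is the chain structure argument in the first step, namely that each block has a unique maximal element. I would settle it through injectivity of $\pi$ off fixed points, as described above. The edge case $m=1$, where $F_2=F_{m+1}$ and the middle range is empty, is consistent with the formula.
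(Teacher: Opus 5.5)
Your proposal is correct and follows the paper's proof. You split $\Tm$ into the $F_{m+1}$ blocks, take the start degrees $0,\dots,F_{m+1}-1$ from \Cref{lem:uniquestart} and the end degrees $F_{m+2}-1,\dots,F_{m+3}-2$ from \Cref{lem:uniquemax,lem:minmaxend}, and count the degree intervals containing $k$. Your extra argument that each block is a single $\pi$-chain with a unique maximal element, via injectivity of $\pi$ off its fixed points, makes explicit a step the paper's proof uses without comment. To finish that argument, note that no element of the block lies above $M$: any $T$ with $\pi(T)=M$ and $T\neq M$ would force $\pi^*(M)=T\neq M$, contradicting the maximality of $M$.
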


\begin{proof}
By \Cref{lem:numchains}, the moves $\pi$ partition \(\mathcal{T}_{m,2}\) into \(F_{m+1}\) disjoint blocks.
By \Cref{lem:decby1}, each block consists of tilings whose \(q\)-degrees form a
consecutive interval of integers, and each degree in that interval appears exactly once in that block.

Write the blocks as \(B_1,\dots,B_{F_{m+1}}\), and let \(\ell_i\) (resp.\ \(r_i\)) be the degree of the minimal
(resp.\ maximal) element of \(B_i\). Then
\[
\sum_{T\in B_i} w(T) \;=\; q^{\ell_i}+q^{\ell_i+1}+\cdots+q^{r_i},
\]
so the coefficient \(a_k\) equals the number of blocks whose degree-interval contains \(k\), i.e.
\[
a_k \;=\; \#\{\,i:\ \ell_i\le k\le r_i\,\}.
\]

By \Cref{lem:uniquestart}, we obtain 
$$ \{\ell_1,\dots,\ell_{F_{m+1}}\}=\{0,1,\dots,F_{m+1}-1\}.$$

By \Cref{lem:uniquemax,lem:minmaxend}, we obtain

$$ \{r_1,\dots,r_{F_{m+1}}\}=\{F_{m+2}-1,\dots,F_{m+3}-2\}.$$

The form of the polynomial follows. 
\end{proof}

From this, we have \Cref{thm:mainthm} for $n=2$.

\begin{corollary} 
    The coefficient sequence of the polynomial $\qfibonom{m+2}{2}$ is symmetric and unimodal.
\end{corollary}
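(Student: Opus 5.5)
The corollary should follow directly from the closed form in \Cref{thm:closedform}. Write $N = F_{m+3}-2$ for the degree. The coefficient sequence is piecewise: it increases by one at each step on $0 \le k \le F_{m+1}-1$, it is constant equal to $F_{m+1}$ on $F_{m+1} \le k \le F_{m+2}-2$, and it decreases by one at each step on $F_{m+2}-1 \le k \le N$.

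For symmetry, I will check that $a_k = a_{N-k}$ by matching the three regimes under the involution $k \mapsto N-k$. If $0 \le k \le F_{m+1}-1$, then
\[
N-k \ge F_{m+3}-2-F_{m+1}+1 = F_{m+2}-1,
\]
so $N-k$ lies in the third range. There $a_{N-k} = F_{m+3}-(N-k)-1 = k+1 = a_k$. The third range maps back onto the first by the same computation. The middle range $[F_{m+1}, F_{m+2}-2]$ is sent to $[F_{m+3}-F_{m+2}, F_{m+3}-2-F_{m+1}] = [F_{m+1}, F_{m+2}-2]$, using $F_{m+3}-F_{m+2}=F_{m+1}$ and $F_{m+3}-F_{m+1}=F_{m+2}$. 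So the middle range is mapped to itself, and $a_k$ is constant on it. This uses nothing beyond the Fibonacci recurrence.

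For unimodality, I need the sequence to be weakly increasing up to some index $M$ and weakly decreasing afterward; I will take $M = F_{m+1}-1$. On the first range the step is $a_{k+1}-a_k = 1$. At the junction $k = F_{m+1}-1 \to F_{m+1}$ the value passes from $F_{m+1}$ to $F_{m+1}$. On the middle range the sequence is constant. At the junction $F_{m+2}-2 \to F_{m+2}-1$ it passes from $F_{m+1}$ to $F_{m+3}-F_{m+2} = F_{m+1}$. On the third range the step is $-1$. Hence the sequence is weakly increasing up to $M = F_{m+1}-1$ and weakly decreasing after it.

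The only step needing care is the small-$m$ degenerate case where the middle range is empty, namely $F_{m+1} > F_{m+2}-2$, which happens for $m=1$ and $m=2$. In that case the two boundary checks collapse into a single junction. I will handle it by checking directly that the first and third formulas agree wherever the ranges overlap or abut: both give $F_{m+1}$ at $k = F_{m+1}-1 = F_{m+2}-1$ when $m = 1$. I will also verify $m=2$ by hand; there the closed form gives $1+q+q+\dots$, which can be checked against $[F_4]_q[F_3]_q/[F_2]_q = [3]_q[2]_q$. No other obstacle is expected, since all the substantive work is in \Cref{thm:closedform}.
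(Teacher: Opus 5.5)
Your proposal is correct and follows the paper, which likewise reads symmetry and unimodality directly off the piecewise formula of \Cref{thm:closedform}. One minor slip: when $m=1,2$ the first and third ranges abut rather than share an index, since $F_m=1$ gives $F_{m+2}-1=F_{m+1}$ (for $m=1$ they are $\{0\}$ and $\{1\}$). Your general junction computation $a_{F_{m+1}-1}=a_{F_{m+2}-1}=F_{m+1}$ already covers these cases, so no separate hand check is needed.
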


\begin{remark}
    The blocks of the partition induced by $\sim$ and our moves $\pi$ can be interpreted as a chain in a \textbf{nearly} symmetric saturated chain decomposition of the poset $(\Tm,\leq)$ where $S\leq T$ if $\deg_qw(S)\leq \deg_qw(T)$. Note that these chains are not symmetric as we do not generally have $\ell_i = F_{m+3}-2-r_i$.
\end{remark}

\section{Unimodality of \texorpdfstring{$\qfibonom{m+3}{3}$}{[m+3][3]F}, Algebraically}\label{sec:product_unimodality}

In this section, we consider an algebraic approach. We begin by proving various facts about $q$-analogs, and then apply these facts to the $q$-Fibonomial coefficients in \Cref{subsec:n=3} to prove symmetry and unimodality. 

\subsection{Key Properties of \texorpdfstring{$q$}{q}-analogs}\label{subsec:qlemmas}

\begin{lemma}
    \label{lem:prod-of-q-analogs-unimodal}
    Let $a\leq b$ be two positive integers. Then,
    $$[a]_q[b]_q=\sum_{k=0}^{a+b-2}c_kq^k, \text{ where }c_k=\begin{cases}
        k+1, & 0\leq k \leq a-1, \\
        a, & a \leq k \leq b-1, \\
        a+b-1-k, & b \leq k \leq a+b-2.
    \end{cases}$$
    In particular, $[a]_q[b]_q$ is symmetric and unimodal.
\end{lemma}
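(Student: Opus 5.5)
We compute the coefficients by direct expansion. Write
\[
[a]_q[b]_q=\Bigl(\sum_{i=0}^{a-1}q^i\Bigr)\Bigl(\sum_{j=0}^{b-1}q^j\Bigr)=\sum_{k}\#\{(i,j): 0\le i\le a-1,\ 0\le j\le b-1,\ i+j=k\}\,q^k .
\]
So $c_k$ counts the integers $i$ with $\max(0,k-b+1)\le i\le \min(a-1,k)$. In other words,
\[
c_k=\min(a-1,k)-\max(0,k-b+1)+1
\]
whenever this quantity is nonnegative, and $c_k=0$ otherwise. The top degree is $(a-1)+(b-1)=a+b-2$.

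We then split into the three ranges of $k$, using $a\le b$ throughout.
\begin{itemize}
\item For $0\le k\le a-1$: we have $k\le b-1$, so the minimum is $k$ and the maximum is $0$, giving $c_k=k+1$.
\item For $a\le k\le b-1$: the minimum is $a-1$ and the maximum is still $0$, giving $c_k=a$.
\item For $b\le k\le a+b-2$: the minimum is $a-1$ and the maximum is $k-b+1$, giving $c_k=a+b-1-k$.
\end{itemize}
The middle range is empty when $a=b$, and the formulas agree at the boundaries.

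Symmetry follows by checking $c_k=c_{a+b-2-k}$. The substitution $k\mapsto a+b-2-k$ swaps the first range with the third and maps the middle range to itself. It also sends $k+1$ to $a+b-1-(a+b-2-k)=k+1$, so the two outer formulas match.

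Unimodality is read off directly. The coefficients increase by $1$ on $[0,a-1]$, stay constant at $a$ on $[a-1,b-1]$, and decrease by $1$ on $[b-1,a+b-2]$, so the sequence is unimodal with its peak plateau on $[a-1,b-1]$.

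There is no real obstacle. The only care needed is the boundary bookkeeping: the cases $a=b$ and $a=1$, and making sure the middle range is handled correctly when it is empty. The alternative derivation via $(1-q^a)(1-q^b)/(1-q)^2$ is unnecessary.
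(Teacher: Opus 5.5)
Your proof is correct, but it computes the coefficients by a different route than the paper. You count lattice points: $c_k$ is the number of pairs $(i,j)\in[0,a-1]\times[0,b-1]$ with $i+j=k$, which equals $\min(a-1,k)-\max(0,k-b+1)+1$, and you then resolve the $\min$ and $\max$ on each range of $k$.

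The paper instead writes $[a]_q[b]_q=(1-q^a-q^b+q^{a+b})(1-q)^{-2}$ and expands $(1-q)^{-2}=\sum_{j\ge 0}(j+1)q^j$. This gives the inclusion--exclusion formula $c_k=(k+1)-\mathbb{I}_{k\ge a}(k-a+1)-\mathbb{I}_{k\ge b}(k-b+1)+\mathbb{I}_{k\ge a+b}(k-a-b+1)$. That formula holds uniformly for all $k\ge 0$ and vanishes automatically past degree $a+b-2$; the paper then just says it simplifies to the piecewise form.

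Your version makes the origin of the three regimes transparent. It also actually checks symmetry and unimodality, which the paper leaves implicit. The paper's version avoids the $\min/\max$ bookkeeping and extends mechanically to products of more $q$-integers by expanding $(1-q)^{-r}$.

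One small imprecision: with the half-open ranges in the statement, $k\mapsto a+b-2-k$ maps $[0,a-1]$ onto $[b-1,a+b-2]$, not onto $[b,a+b-2]$. So it does not literally swap the first and third ranges. This is harmless, because, as you note, the formulas agree at the shared endpoints. You can therefore work with the closed intervals $[0,a-1]$, $[a-1,b-1]$, $[b-1,a+b-2]$. Alternatively, you get symmetry in one line from the involution $(i,j)\mapsto(a-1-i,\,b-1-j)$ on your pairs.
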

\begin{proof}

We express each polynomial as a finite geometric series:
\[
[a]_q[b]_q=\frac{1-q^a}{1-q}\cdot\frac{1-q^b}{1-q}
=(1-q^a-q^b+q^{a+b})(1-q)^{-2}.
\]
Using the power series expansion $(1-q)^{-2} = \sum_{j=0}^{\infty} (j+1)q^j$, we distribute the numerator:
    \[
    [a]_q[b]_q
    =\sum_{j=0}^{\infty} (j+1)q^j
    -\sum_{j=0}^{\infty} (j+1)q^{j+a}
    -\sum_{j=0}^{\infty} (j+1)q^{j+b}
    +\sum_{j=0}^{\infty} (j+1)q^{j+a+b}.
    \]
    Let $[a]_q[b]_q=\sum_k c_kq^k$. The coefficient $c_k$ of $q^k$ is found by shifting the indices and applying the indicator function $\mathbb{I}$:
    \[
    c_k
    =(k+1)
    -\mathbb{I}_{k \ge a}(k-a+1)
    -\mathbb{I}_{k \ge b}(k-b+1)
    +\mathbb{I}_{k \ge a+b}(k-a-b+1).
    \] 
    This simplifies to the statement above.
\end{proof}

\begin{lemma}\label{lem:unimodal_an}
    Suppose $a, b, {\pow}$ are three positive integers such that ${\pow}\,|\, a$. Then $[a]_q[b]_{q^{{\pow}}}$ is symmetric and unimodal.

\end{lemma}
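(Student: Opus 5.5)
Write $a = r s$ for a positive integer $s$. The plan is to factor $[a]_q$ compatibly with the substitution $q\mapsto q^r$ and then reduce to \Cref{lem:prod-of-q-analogs-unimodal}. Since
\[
[a]_q = \frac{1-q^{rs}}{1-q} = \frac{1-q^{r}}{1-q}\cdot\frac{1-q^{rs}}{1-q^{r}} = [r]_q\,[s]_{q^r},
\]
we obtain
\[
[a]_q[b]_{q^r} = [r]_q \cdot \bigl([s]_{q^r}[b]_{q^r}\bigr).
\]
By \Cref{lem:prod-of-q-analogs-unimodal}, applied in the variable $Q = q^r$, the polynomial $P(Q) = [s]_Q[b]_Q = \sum_j c_j Q^j$ has a symmetric, unimodal coefficient sequence $(c_j)$ with no internal zeros. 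Hence $[a]_q[b]_{q^r} = \sum_j c_j q^{rj}(1+q+\dots+q^{r-1})$.

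The key observation is that multiplying by $[r]_q$ fills each gap of length $r$ exactly once. The monomials $q^{rj+i}$ with $0\le i\le r-1$ are distinct for distinct pairs $(j,i)$. So the coefficient of $q^k$ in the product, writing $k = rj+i$ with $0\le i<r$, is exactly $c_j$. The coefficient sequence of $[a]_q[b]_{q^r}$ is therefore $(c_0,\dots,c_0,c_1,\dots,c_1,\dots,c_N,\dots,c_N)$, each $c_j$ repeated $r$ times, where $N = s+b-2$.

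Unimodality is then immediate, since repeating each entry of a unimodal sequence preserves unimodality. For symmetry, the total degree is $r(N+1)-1$. The index $k = rj+i$ is sent to $r(N+1)-1-k = r(N-j)+(r-1-i)$, which lies in block $N-j$. Its coefficient is $c_{N-j} = c_j$ by the symmetry of $P$.

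The only step needing care is the factorization $[rs]_q = [r]_q[s]_{q^r}$ together with the observation that the blocks do not overlap. Both are elementary, so I expect no real obstacle. The divisibility hypothesis $r\mid a$ is used exactly to make this factorization possible.
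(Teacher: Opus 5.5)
Your proposal is correct and follows essentially the same route as the paper: you factor $[a]_q=[r]_q[s]_{q^r}$, apply \Cref{lem:prod-of-q-analogs-unimodal} in the variable $q^r$, and observe that the coefficients of $[a]_q[b]_{q^r}$ are those of $[s]_q[b]_q$ with each entry repeated $r$ times. Your explicit symmetry check via the index map $rj+i\mapsto r(N-j)+(r-1-i)$ spells out a step the paper leaves implicit.
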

\begin{proof}
    Let $a = k {\pow}$.
    First, notice that
    $$[a]_q = \dfrac{1 - q^{k {\pow}}}{1 - q} = \dfrac{1 - q^{\pow}}{1 - q} \cdot \dfrac{1 - (q^{\pow})^k}{1 - q^\pow} = (1+q + \ldots + q^{{\pow}-1}) [k]_{q^{\pow}}.$$
    Therefore 
    $$[a]_q [b]_{q^{\pow}} = (1 + q + \ldots + q^{{\pow}-1})[k]_{q^{\pow}} [b]_{q^{\pow}}.$$

    Denote $[k]_q[b]_q = \sum_{i = 0}^N c_i q^i$. Then 
    \begin{align*}
        [a]_q [b]_q &= (1 + q + \ldots + q^{{\pow}-1})[k]_{q^{\pow}} [b]_{q^{\pow}} = (1 + q + \ldots + q^{{\pow}-1})\sum_{i=0}^N c_i q^{i{\pow}} = \sum_{j = 0}^{{\pow}-1} \sum_{i=0}^N c_i q^{i{\pow} + j}\\ &= c_0 + c_0q + \ldots + c_0 q^{{\pow}-1} + c_1 q^{\pow} + c_1 q^{{\pow}+1} + \ldots + c_{N} q^{N{\pow} + {\pow}-1}.
    \end{align*}
    By \Cref{lem:prod-of-q-analogs-unimodal}, the coefficient sequence $\{c_k\}$ is symmetric and unimodal.
\end{proof}

\begin{corollary} \label{cor:divisionunimodal}
    Let $a_1,...,a_k,b, \pow$ be positive integers. Then if $r$ divides $a_i$ for some $1\leq i \leq k$, the product
    \[
        [a_1]_q\dots[a_k]_q[b]_{q^{\pow}}
    \]
    is symmetric and unimodal.
\end{corollary}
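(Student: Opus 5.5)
Without loss of generality, reorder the factors so that $r \mid a_1$. We then split the product as
\[
[a_1]_q\cdots[a_k]_q[b]_{q^{r}} \;=\; \Bigl([a_1]_q[b]_{q^{r}}\Bigr)\cdot [a_2]_q\cdots[a_k]_q .
\]
By \Cref{lem:unimodal_an}, the first factor $[a_1]_q[b]_{q^r}$ is symmetric and unimodal, and it has nonnegative coefficients. Each remaining factor $[a_i]_q$ for $i\ge 2$ is also symmetric and unimodal with nonnegative coefficients, since its coefficient sequence is all ones. This is the case $b=1$ of \Cref{lem:prod-of-q-analogs-unimodal}.

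The key tool will be the classical fact that the product of two symmetric unimodal polynomials with nonnegative coefficients is again symmetric and unimodal (Andrews; see also \cite{stanley1989log}). If we prefer not to cite it, we can prove it directly. Write a symmetric unimodal polynomial $f$ of degree $N$ with nonnegative coefficients as a nonnegative combination of ``centered intervals'' $q^{i}[N-2i+1]_q$ for $0\le i\le N/2$. Concretely, the coefficient of $q^i[N-2i+1]_q$ is $c_i-c_{i-1}$, which is nonnegative by unimodality and symmetry. Decompose $g$ of degree $M$ in the same way. Then $fg$ is a nonnegative combination of products $q^{i+j}[N-2i+1]_q[M-2j+1]_q$. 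By \Cref{lem:prod-of-q-analogs-unimodal}, each such product is symmetric and unimodal, and it is centered at degree $(i+j)+\tfrac{(N-2i)+(M-2j)}{2} = \tfrac{N+M}{2}$. A nonnegative sum of symmetric unimodal polynomials sharing the same center of symmetry is symmetric and unimodal, because each summand is nondecreasing up to the common center and nonincreasing after it.

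Applying this fact inductively, multiplying in one factor $[a_i]_q$ at a time, gives the claim. The only point needing care is the common-center argument in the preceding paragraph, which is what makes symmetry essential. It is routine once the interval decomposition is written down, so I do not expect any step to be a real obstacle.
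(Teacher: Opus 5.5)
Your proof is correct and follows the paper's route: isolate $[a_i]_q[b]_{q^r}$ with $r \mid a_i$, apply \Cref{lem:unimodal_an}, and then use that a product of symmetric unimodal polynomials with nonnegative coefficients is again symmetric and unimodal. The paper simply cites Zeilberger's Observation 2 for that last fact, while your centered-interval decomposition is the standard proof of it and is carried out correctly. One trivial slip: under the hypothesis $a\le b$ of \Cref{lem:prod-of-q-analogs-unimodal}, the degenerate case giving $[a_i]_q$ is $a=1$, not $b=1$; this is immaterial, since an all-ones coefficient sequence is obviously symmetric and unimodal.
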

\begin{proof}
    By \Cref{lem:unimodal_an}, $[a_i]_q[b]_{q^{\pow}}$ is symmetric and unimodal. As each $[a_j]_q$ is symmetric and unimodal for $1\leq j \leq k$, $[a_1]_q\dots[a_k]_q[b]_{q^{\pow}}$ is also symmetric and unimodal (see, for example, \cite[Observation 2]{zeilberger1989one}).
\end{proof}

Before we state the main algebraic lemma, we need another technical lemma.

\begin{lemma}
    Suppose $a,b,c\in\mathbb{Z}$ with $a \leq b$ and $a,b$ odd. Define
    $$A(k) = \# \left \{ \ell \in [0, c - 1] \, \Bigm| \, \frac{k - a + 2}{2} \le \ell  \le \frac{k + 1}{2}, \; \ell \in \Z_{\geq 0} \right \},$$
    and 
    $$B(k) =  \# \left \{ \ell \in [0, c - 1] \, \Bigm|\, \frac{k - a - b + 2}{2} \le \ell \le \frac{k - b + 1}{2}, \; \ell \in \Z_{\geq0} \right \}.$$
    Then:
    \begin{enumerate}
        \item If $2c \leq a+b$, then $A(k) \geq B(k)$ for every $k \leq c + \frac{a+b}{2} - 3$.
        \item If $2c > a+b$, then $A(a+b-2) < B(a+b-2)$.
    \end{enumerate}
    \label{lem:Ak-greater-than-Bk}
\end{lemma}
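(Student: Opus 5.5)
The plan is to compute $A(k)$ and $B(k)$ explicitly as lengths of integer intervals, using that $a$ and $b$ are odd, and then compare them case by case according to the parity of $k$ and to which of the constraints $\ell\ge 0$ and $\ell\le c-1$ is active. Write $A(k)=\#\bigl([L_A,U_A]\cap[0,c-1]\cap\Z\bigr)$ with $L_A=\lceil (k-a+2)/2\rceil$ and $U_A=\lfloor (k+1)/2\rfloor$. Similarly write $B(k)=\#\bigl([L_B,U_B]\cap[0,c-1]\cap\Z\bigr)$ with $L_B=\lceil (k-a-b+2)/2\rceil$ and $U_B=\lfloor (k-b+1)/2\rfloor$.

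The first observation is that the window defining $B(k)$ is exactly the window defining $A(k-b)$, so $B(k)=A(k-b)$. Since $b$ is odd, $k-b$ has the opposite parity to $k$. A direct parity computation with $a$ odd shows the untruncated window for $A(k)$ contains $(a+1)/2$ integers when $k$ is odd and $(a-1)/2$ when $k$ is even. Consequently, absent truncation by $0$ or $c-1$, we would get $A(k)>B(k)$ for odd $k$ and $A(k)<B(k)$ for even $k$. So the whole content of the lemma is how the truncations interact with this parity effect.

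For part (1), assume $2c\le a+b$ and $k\le c+\frac{a+b}{2}-3$. Then $k\le a+b-3$, so $(k-a-b+2)/2<0$ and $B$'s lower constraint is cut off at $0$. Hence
\[
B(k)=\max\bigl(0,\ \min(c-1,U_B)+1\bigr).
\]
If $U_B<0$ there is nothing to prove. Otherwise I will compare this with $A(k)=\min(c-1,U_A)-\max(0,L_A)+1$ by splitting into subcases:
\begin{itemize}
\item[(i)] $L_A\le 0$, in which case $A(k)=\min(c-1,U_A)+1\ge B(k)$ trivially since $U_A\ge U_B$;
\item[(ii)] $L_A>0$ and $U_A\le c-1$;
\item[(iii)] $L_A>0$ and $U_A>c-1$.
\end{itemize}
In case (ii), odd $k$ is handled by the untruncated count. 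For even $k$, the loss of one element in $A$ is compensated because $B$'s window has been cut by $0$: the relevant inequality is $U_B+1\le (a-1)/2$, i.e.\ $k\le a+b-3$ roughly. In case (iii), the top of $A$ is truncated at $c-1$. Here the required inequality $c-L_A\ge \min(c-1,U_B)+1$ reduces, after substituting $L_A$, to $k\le c+\frac{a+b}{2}-3$ (up to parity adjustments), which is precisely the hypothesis.

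I expect the main obstacle to be this bookkeeping. The subcase where both $A$ is truncated at $c-1$ and $k$ is even is where the bound $c+\frac{a+b}{2}-3$ should be sharp, so the floors and ceilings must be tracked carefully there. I will organize the case analysis by the four combinations of parity of $k$ and whether $U_A\le c-1$, writing $k=2j$ or $k=2j+1$ to clear the floors.

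For part (2), evaluate at $k=a+b-2$, which is even. For $A$, we get $L_A=\lceil b/2\rceil=(b+1)/2$ and $U_A=\lfloor (a+b-1)/2\rfloor=(a+b-2)/2$. Since $2c>a+b$ gives $c-1\ge (a+b)/2-1$, there is no truncation and $A(a+b-2)=(a-1)/2$. For $B$, the window is $[0,(a-1)/2]$, and $c-1\ge (a-1)/2$ because $c>(a+b)/2\ge a$. Hence $B(a+b-2)=(a+1)/2>A(a+b-2)$.
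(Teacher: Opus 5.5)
Your proposal is correct and is essentially the paper's argument: both compute $A(k)$ and $B(k)$ as parity-dependent counts of integers in a window (untruncated sizes $\frac{a\pm 1}{2}$). Both use $k\le a+b-3$ to cut $B$'s window at $0$, treat separately the range where $A$'s window is cut at $c-1$, and do the same computation at $k=a+b-2$ for part (2). The only difference is cosmetic: in the truncated range you check $L_A+U_B\le c-1$ directly for each parity of $k$, whereas the paper notes that $A$ is non-increasing and $B$ non-decreasing there and checks only the endpoint $k=c+\frac{a+b}{2}-3$.
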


\begin{proof}
Assume $2c \leq a+b$ and fix $k \leq c + \tfrac{a+b}{2} - 3$.
 
Since $2c \leq a+b$, we have $k \leq a+b-3$, so $\tfrac{k-a-b+2}{2} \leq -\tfrac{1}{2}$ and the lower bound on $\ell$ in $B(k)$ reduces to $0 \leq \ell$. Since $a \leq b$, we have $k \leq c + b - 3$, so $\tfrac{k-b+1}{2} \leq c - 1$ and the upper bound reduces to $\ell \leq \tfrac{k-b+1}{2}$. Hence
\begin{align}
    B(k)&=\# \left \{ \ell \in \Z \Bigm| 0 \le \ell \le \frac{k - b + 1}{2}\right \} \nonumber\\
    &=\begin{cases} 0, & k < b-1, \\ \left\lfloor \dfrac{k-b+1}{2} \right\rfloor + 1, & b-1 \leq k \leq c + \dfrac{a+b}{2} - 3. \label{eq:Bk-formula} \end{cases}
\end{align}
For $k < b-1$, then, $B(k) = 0 \leq A(k)$.
 
Now assume $k \geq b-1$. Then $\tfrac{k-a+2}{2} \geq \tfrac{1}{2}$, so the lower bound on $\ell$ in $A(k)$ reduces to $\tfrac{k-a+2}{2} \leq \ell$, while the upper bound is $\min\bigl(\tfrac{k+1}{2}, c-1\bigr)$, with $\tfrac{k+1}{2} \leq c-1$ equivalent to $k \leq 2c-3$. Since $a$ is odd, both endpoints $\tfrac{k-a+2}{2}$ and $\tfrac{k+1}{2}$ are integers when $k$ is odd and half-integers when $k$ is even. Thus
\begin{align}
A(k) &= \# \left \{ \ell \in \Z \, \Bigm|  \frac{k - a + 2}{2} \le \ell  \le \min \left(\frac{k + 1}{2},c-1\right) \right \} \nonumber \\
&= \begin{cases} 
    \dfrac{a-1}{2}, & b-1 \leq k \leq 2c-3,\ k \text{ even}, \\[5pt] 
    \dfrac{a+1}{2}, & b-1 \leq k \leq 2c-3,\ k \text{ odd}, \\[5pt]
    \left\lfloor \dfrac{a+2c-k-2}{2} \right\rfloor, & 2c-2 \leq k \leq c + \dfrac{a+b}{2} - 3. \label{eq:Ak-formula} 
\end{cases}
\end{align}
 
For $b-1 \leq k \leq 2c-3$, we have $k - b + 1 \leq a - 2$, so $B(k) \leq \lfloor \tfrac{a-2}{2} \rfloor + 1 = \tfrac{a-1}{2} \leq A(k)$, where the final equality uses $a$ odd.
 
For $2c-2 \leq k \leq c + \tfrac{a+b}{2} - 3$, $A$ is non-increasing and $B$ is non-decreasing, so it suffices to verify $A(c + \tfrac{a+b}{2} - 3) \geq B(c + \tfrac{a+b}{2} - 3)$. Then \eqref{eq:Bk-formula} and \eqref{eq:Ak-formula} give
\[
    A(c + \tfrac{a+b}{2} - 3) = \left\lfloor \tfrac{c + \tfrac{a-b}{2}+1}{2} \right\rfloor \geq \left\lfloor \tfrac{c + \tfrac{a-b}{2}}{2} \right\rfloor = B(c + \tfrac{a+b}{2} - 3).
\]
This proves~(1).

For~(2), suppose $2c > a+b$. We have 
$$B(a+b-2)= \# \left \{ \ell \in [0, c - 1] \, \Bigm|\, 0 \le \ell \le \frac{a -1}{2}, \; \ell \in \Z_{\geq 0} \right \}=\frac{a+1}{2}$$
since $a$ is odd. 
$$A(a+b-2)=\# \left \{ \ell \in [0, c - 1] \, \Bigm| \, \frac{b}{2} \le \ell  \le \frac{a+b-1}{2}, \; \ell \in \Z_{ \geq 0} \right \}=\frac{a-1}{2}$$
since $b, a+b-1$ are odd and $\frac{a+b-1}{2} < c-1$. We have $A(a+b-2)<B(a+b-2)$.

\end{proof}

\begin{proposition}
\label{lem:unimodal_iff_special_case}
    For any $a, b,c \in \mathbb{N}$, $T(q)=[a]_q[b]_q[c]_{q^2}$ is symmetric and unimodal if and only if $2c \leq a+b$ or $a$ or $b$ is even.
\end{proposition}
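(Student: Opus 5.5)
Symmetry is automatic, since each factor $[a]_q$, $[b]_q$, $[c]_{q^2}$ is a palindromic polynomial with nonnegative coefficients and products of such polynomials are palindromic. The substance is unimodality. If $a$ or $b$ is even, then $2$ divides one of them, and \Cref{cor:divisionunimodal} (with $\pow=2$, $k=2$) gives unimodality immediately. So the remaining work concerns $a,b$ both odd; without loss of generality $a\le b$. Here I will prove that unimodality holds if $2c\le a+b$ and fails if $2c>a+b$.

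The plan is to write $[c]_{q^2}=\sum_{\ell=0}^{c-1}q^{2\ell}$ and use the explicit coefficients $c_k$ of $[a]_q[b]_q$ from \Cref{lem:prod-of-q-analogs-unimodal}. The coefficient of $q^k$ in $T$ is $t_k=\sum_{\ell=0}^{c-1}c_{k-2\ell}$. I will look at the first difference $t_{k+1}-t_k=\sum_\ell (c_{k+1-2\ell}-c_{k-2\ell})$. The increments $c_{j+1}-c_j$ are $+1$ for $0\le j+1\le a-1$ (including $j=-1$), $0$ on the plateau, and $-1$ for $b\le j+1\le a+b-1$. Hence
\[t_{k+1}-t_k=\#\{\ell:\ 0\le k+1-2\ell\le a-1\}-\#\{\ell:\ b\le k+1-2\ell\le a+b-1\},\]
with $\ell$ ranging over $[0,c-1]$. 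Rewriting each condition as a range for $\ell$ gives exactly $A(k)-B(k)$, as defined in \Cref{lem:Ak-greater-than-Bk}. I would double-check the boundary conventions carefully, in particular the endpoints $\frac{k-a+2}{2}$ and $\frac{k+1}{2}$, since this is where off-by-one errors creep in.

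Next I handle the degree. $T$ has degree $N=a+b-2+2(c-1)$, so its center is $N/2=c+\frac{a+b}{2}-2$. By symmetry, unimodality is equivalent to $t_{k+1}\ge t_k$ for all $k+1\le \lfloor N/2\rfloor$, i.e. $k\le c+\frac{a+b}{2}-3$. Note that $N$ is even because $a+b$ is even. Part (1) of \Cref{lem:Ak-greater-than-Bk} gives exactly $A(k)\ge B(k)$ on this range when $2c\le a+b$, which proves unimodality. For negative $k$ both counts vanish, so there is nothing to check there.

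For the converse, suppose $2c>a+b$. Then $k=a+b-2$ satisfies $k\le c+\frac{a+b}{2}-3$, because $a+b-2<c+\frac{a+b}{2}-2$ and the integers force $\le -3$ offset; I would verify this with $2c\ge a+b+2$. By part (2) of the same lemma, $t_{k+1}<t_k$ at a point strictly left of the center. Combined with symmetry, which forces the mode to sit at the center, this violates unimodality. The main obstacle is the bookkeeping in the translation $t_{k+1}-t_k=A(k)-B(k)$: making sure the ranges for $\ell$ match the lemma's definitions exactly. The rest is direct application of the earlier results.
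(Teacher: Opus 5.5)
Your proposal is correct and follows the paper's proof essentially step for step. You handle the even case via \Cref{cor:divisionunimodal}, use the decomposition $T=\sum_{\ell}q^{2\ell}[a]_q[b]_q$ to turn first differences into $A(k)-B(k)$, and invoke both parts of \Cref{lem:Ak-greater-than-Bk}; your explicit check that $a+b-2\le c+\frac{a+b}{2}-3$ (from $2c\ge a+b+2$) fills in a step the paper leaves implicit. One harmless slip: $A(-1)=1$, not $0$, but only $k\ge 0$ matters for unimodality anyway.
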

\begin{proof}
Assume without loss of generality $a \leq b$. If $a$ or $b$ is even, this follows from \Cref{cor:divisionunimodal}. 

Assume $a$ and $b$ are odd. Since $T(q)$ is a product of symmetric polynomials, it is symmetric, and it is unimodal if and only if $[q^{k+1}] T(q) - [q^k] T(q) \geq 0$ for every $0 \leq k \leq \frac{\deg T}{2} - 1 = c + \frac{a+b}{2} - 3$.

Write
\[
T(q) = \sum_{\ell = 0}^{c-1} T_\ell(q), \qquad T_\ell(q) := q^{2\ell}[a]_q[b]_q,
\]
so that $[q^{k+1}] T(q) - [q^k] T(q) = \sum_\ell \bigl([q^{k+1}] T_\ell(q) - [q^k] T_\ell(q)\bigr)$. Applying \Cref{lem:prod-of-q-analogs-unimodal} to the coefficients of $T_\ell$, we get
\[
[q^{k+1}] T_\ell(q) - [q^k] T_\ell(q) =
\begin{cases}
1, & 2\ell - 1 \leq k \leq 2\ell + a - 2, \\
-1, & 2\ell + b - 1 \leq k \leq 2\ell + a + b - 2, \\
0, & \text{otherwise.}
\end{cases}
\]

Define
\begin{align*}
A(k) &:= \#\{\ell \in [0, c-1] : 2\ell - 1 \leq k \leq 2\ell + a - 2\}\\
&= \#\left\{\ell \in [0, c-1] : \tfrac{k - a + 2}{2} \leq \ell \leq \tfrac{k + 1}{2}\right\}, \\
B(k) &:= \#\{\ell \in [0, c-1] : 2\ell + b - 1 \leq k \leq 2\ell + a + b - 2\} \\
&= \#\left\{\ell \in [0, c-1] : \tfrac{k - a - b + 2}{2} \leq \ell \leq \tfrac{k - b + 1}{2}\right\},
\end{align*}
so that $[q^{k+1}] T(q) - [q^k] T(q) = A(k) - B(k)$. Both directions of the lemma now follow from \Cref{lem:Ak-greater-than-Bk}.
\end{proof}

\subsection{Proof of \texorpdfstring{\Cref{thm:mainthm}}{Theorem \ref{thm:mainthm}}}\label{subsec:n=3}

We now prove unimodality of $\qfibonom{m+1}{1}, \qfibonom{m+2}{2},\qfibonom{m+3}{3}$. 

\begin{proof}[Proof of \Cref{thm:mainthm}]
The case $n=1$ is immediate, since $\qfibonom{m+1}1=[F_{m+1}]_q$. For $n=2$, $\qfibonom{m+2}{2} = [F_{m+1}]_q[F_{m+2}]_q$, and the result follows from \Cref{lem:prod-of-q-analogs-unimodal}.

It remains to treat $n=3$. By definition of factorials and since $[F_3]_q^! = 1+q$,
\[
\qfibonom{m+3}{3}
=\frac{[F_{m+3}]_q^!}{[F_3]_q^!\,[F_m]_q^!}
=\frac{[F_{m+3}]_q [F_{m+2}]_q [F_{m+1}]_q}{1+q}.
\]
Among the three consecutive Fibonacci numbers $F_{m+1}, F_{m+2}, F_{m+3}$, exactly one is even by \Cref{prop:identities}(4); call this term $E$ and the remaining (odd) terms $A \leq B$. Then
\[
\qfibonom{m+3}{3} = [A]_q [B]_q \cdot \frac{[E]_q}{1+q} = [A]_q [B]_q \cdot \left[\tfrac{E}{2}\right]_{q^2}.
\]
Since $2\cdot \frac{E}2 \leq A + B$, \Cref{lem:unimodal_iff_special_case} gives that $\qfibonom{m+3}{3}$ is symmetric and unimodal.
\end{proof}

\section{Future work: More on symmetry, \texorpdfstring{$q$}{q}-analogs, and Fibocatalan numbers}\label{sec:conc}

The methods developed in this paper suggest several further directions beyond the cases proved thus far. In particular, our combinatorial and algebraic proofs of unimodality raise related questions about symmetry, products of $q$-analogs, and $q$-FiboCatalan numbers.

\subsection{Combinatorial Proof of Symmetry}
We've proven the unimodality of $\qfibonom{m+n}{n}$ for $n\leq3$ and verified it computationally\footnote{Source code: \href{https://github.com/brendanconnelly/Fibonomial-Conjecture-Tests}{https://github.com/brendanconnelly/Fibonomial-Conjecture-Tests}.\label{fn:code}} for $m+n\leq20$ and $m=n\leq16$,  but proving for all $m,n$ remains an open problem. 

The symmetry of the coefficient sequence of $\qfibonom{m+n}{n}$ is algebraically clear. However,  with the weighted path-domino tiling model, this fact is combinatorially non-trivial. We find a combinatorial explanation in \Cref{sec:comboproof} for $n=2$, and leave the general case as an open problem.

\begin{problem}
    Give a combinatorial proof of the symmetry of the coefficient sequence of $\qfibonom{m+n}{n}$ for all $m,n\geq3$. 
\end{problem}

Similarly, symmetry with respect to $m$ and $n$ is algebraically clear, but it lacks a combinatorial interpretation and is nontrivial from the path-domino tiling model.

\begin{problem}
    Give a combinatorial proof of the equality $\qfibonom{m+n}{n}=\qfibonom{m+n}{m}$ for all $m,n$.
\end{problem}

\subsection{More on \texorpdfstring{$q$}{q}-analogs}

In proving the symmetry and unimodality of $\qfibonom{m+3}{3}$, we encounter several lemmas regarding $q$-analogs, for example \Cref{lem:unimodal_iff_special_case}. We attempt to generalize some of these results below. In \Cref{lem:unimodal_an}, we proved the simplest case needed to prove \Cref{lem:unimodal_iff_special_case}. We prove an analogous proposition below.

\begin{proposition}\label{prop:unimodal}
    Let $a,b,\pow\in\mathbb{Z}_{>0}$. Then $[a]_q[b]_{q^{\pow}}$ is unimodal if and only if $a \geq \pow(b-1)$ or $\pow \,|\, a$. 
\end{proposition}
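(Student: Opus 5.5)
Write $P(q)=[a]_q[b]_{q^{\pow}}=\sum_{j=0}^{b-1} q^{\pow j}[a]_q$. I will work with this sum of shifted blocks: $P$ is the sum of $b$ copies of the all-ones block of length $a$, starting at $0,\pow,2\pow,\dots,(b-1)\pow$. The coefficient $p_k$ of $q^k$ counts the $j\in[0,b-1]$ with $\pow j\le k\le \pow j+a-1$. As in the proof of \Cref{lem:unimodal_iff_special_case}, I will control the first differences. We have
\[
p_{k+1}-p_k=\#\{j: \pow j=k+1\}-\#\{j:\pow j+a=k+1\},
\]
where $j$ ranges over $[0,b-1]$. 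So each difference lies in $\{-1,0,1\}$. It is $+1$ exactly at block starts $k+1\in \pow\{0,\dots,b-1\}$ that are not also block ends. It is $-1$ exactly at $k+1\in a+\pow\{0,\dots,b-1\}$ that are not also starts.

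\textbf{Sufficiency.} If $\pow\mid a$, the result is \Cref{lem:unimodal_an}. Suppose instead $a\ge \pow(b-1)$. Every block start $\pow j\le \pow(b-1)\le a$ then occurs no later than the first block end, which is at $a$. So all $+1$ differences occur at indices $k+1\le a$, and all $-1$ differences occur at indices $k+1\ge a$. At the index $a$ itself a start and an end may coincide, but they cancel. Hence the sequence is nondecreasing up to $a$ and nonincreasing afterwards, so it is unimodal.

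\textbf{Necessity.} Assume $a<\pow(b-1)$ and $\pow\nmid a$. I want to exhibit an increase occurring after a decrease. Since $a<\pow(b-1)$, the first block end at $a$ precedes the last block start at $\pow(b-1)$. I will choose the smallest $j_0$ with $\pow j_0>a$; such a $j_0\le b-1$ exists because $\pow(b-1)>a$.
\begin{itemize}
\item At index $a$ the difference is $-1$, because $a$ is not a multiple of $\pow$ and so no block starts there.
\item At index $\pow j_0$ the difference is $+1$, because block ends are congruent to $a\not\equiv 0 \pmod{\pow}$, so no block ends there.
\end{itemize}
Since $a<\pow j_0$, the sequence strictly drops and later strictly rises. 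It therefore has a dip $p_{k_1}>\min>p_{k_2}$-pattern, i.e. some $p_i>p_j<p_l$ with $i<j<l$, so it is not unimodal.

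\textbf{Edge cases.} The case $b=1$ is covered by the first condition. The main thing to double-check is the handling of coincidences of starts and ends in the sufficiency argument. I expect this boundary bookkeeping to be the only real obstacle, and the residue argument above disposes of it in the necessity direction.
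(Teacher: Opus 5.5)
Your proof is correct, but it takes a somewhat different route from the paper's. The paper computes coefficients explicitly and relies on the symmetry of $P$ in both directions:
\begin{itemize}
\item For sufficiency, it shows $c_t=\min\bigl(b-1,\lfloor t/r\rfloor\bigr)+1$ for $0\le t\le a-1$, notes that $\lfloor D/2\rfloor\le a-1$ where $D=(a-1)+r(b-1)$, and reflects.
\item For necessity, it computes $c_{a-1}=\lfloor (a-1)/r\rfloor+1$ and $c_a=\lfloor a/r\rfloor=\lfloor (a-1)/r\rfloor$. Since $a<r(b-1)$ is equivalent to $a\le D/2$, this drop lies in the first half, and symmetry implicitly supplies a later rise.
\end{itemize}

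You instead work entirely with first differences (starts minus ends of the shifted blocks, in the spirit of the $A(k)-B(k)$ argument in the proof of \Cref{lem:unimodal_iff_special_case}), and you never use symmetry. Sufficiency becomes the observation that every start $rj\le r(b-1)\le a$ comes no later than every end $rj+a\ge a$. For necessity, you exhibit the rise directly at $rj_0$. Your residue observation, that ends are congruent to $a\not\equiv 0 \pmod r$, rules out cancellation there.

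The shared core is the same drop $p_a<p_{a-1}$: a block ends at $a$ and none starts there because $r\nmid a$. Your version is more self-contained and locates the failure explicitly; the paper's also yields the coefficient values.

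Three minor points of presentation:
\begin{itemize}
\item In the sufficiency step, the sequence is nondecreasing up to index $a-1$, since the difference at index $a$ is $\le 0$.
\item Minimality of $j_0$ is unnecessary; $j_0=b-1$ already works.
\item The garbled ``$p_{k_1}>\min>p_{k_2}$'' should read $p_{a-1}>p_j<p_{rj_0}$, where $j$ minimizes $p$ on $[a,rj_0-1]$.
\end{itemize}
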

\begin{proof}

If $\pow \,|\, a$, then $[a]_q[b]_{q^{\pow}}$ is unimodal by \Cref{lem:unimodal_an}. Assume $\pow \nmid a$; we show $[a]_q[b]_{q^{\pow}}$ is unimodal if and only if $a \geq \pow(b-1)$.

Write
\[
P(q) := [a]_q[b]_{q^{\pow}} = \sum_{i=0}^{a-1}\sum_{j=0}^{b-1} q^{i+{\pow}j} = \sum_{t=0}^{D} c_t q^t, \qquad D := (a-1)+{\pow}(b-1).
\]
Since $[a]_q$ and $[b]_{q^{\pow}}$ are symmetric of degrees $a-1$ and $r(b-1)$, $P$ is symmetric of degree $D$.

Suppose first that $a \geq {\pow}(b-1)$, so $\lfloor \frac{D}{2} \rfloor \leq a-1$. For $0 \leq t \leq a-1$, the constraint $i + {\pow}j = t$ with $0 \leq i \leq a-1$ forces only ${\pow}j \leq t$, so
\[
c_t = \#\{j \in \{0,\dots,b-1\} : {\pow}j \leq t\} = \min\!\left(b-1, \lfloor \frac{t}{{\pow}} \rfloor\right) + 1,
\]
which is weakly increasing in $t$. Symmetry then gives unimodality.

Conversely, suppose $a < \pow(b-1)$, so $a \leq \frac{D}{2}$; we show $c_a < c_{a-1}$. For $c_{a-1}$: any $j \geq 0$ with $\pow j \leq a-1$ satisfies $j < b$ and $i := a-1-\pow j \in [0,a-1]$, so $c_{a-1} = \lfloor \frac{a-1}{\pow} \rfloor + 1$. For $c_a$: we need $i = a - \pow j \leq a-1$, forcing $j \geq 1$, so $c_a = \#\{j \geq 1 : \pow j \leq a\} = \lfloor \frac{a}{\pow}\rfloor = \lfloor \frac{a-1}{\pow} \rfloor$, using $\pow \nmid a$. Hence $c_a < c_{a-1}$.
\end{proof}

We also conjecture the following as a generalization of \Cref{cor:divisionunimodal}, \Cref{lem:unimodal_iff_special_case}, and \Cref{prop:unimodal}.

\begin{conjecture}\label{conj:unimodalitylemma}
Let $\pow \geq2, k \geq 1$ and let $a_1,\ldots, a_k,b$ be positive integers. Then if $\pow \mid a_i$ for any $1 \leq i \leq k$ or
\[
b \leq 1+\sum_{i=1}^{k}\left\lfloor\frac{a_i}{\pow}\right\rfloor,
\] 
the polynomial $[a_1]_q\dots[a_k]_q[b]_{q^{\pow}}$ is unimodal. Moreover, if $k \leq 3$ or $\pow \leq 3$, this condition is also necessary.
\end{conjecture}

We have computationally verified \Cref{conj:unimodalitylemma} for $k \leq 5, \pow \leq 6$ and $\max\{a_i,b\} \leq 15$. In general, the condition is not necessary: $([3]_q)^4 [2]_{q^4}$ is unimodal despite violating the inequality $b \leq 1 + \sum_{i=1}^{k}\lfloor \frac{a_i}{\pow} \rfloor$.

We also note that \Cref{conj:unimodalitylemma} does not directly imply unimodality of $\qfibonom{m+n}{n}$, because the factorization of $\qfibonom{m+n}{n}$ for $n\geq 4$ as a product of $q$-analogs does not match the form of $[a_1]_q\dots[a_{k}]_q[b]_{q^{\pow}}$ in the conjecture.

\subsection{\texorpdfstring{$q$}{q}-FiboCatalan Numbers}

For relatively prime integers $m, n \in \mathbb{Z}_{>0}$, the rational Catalan number
\[
    C_{m,n} = \frac{1}{m+n}\binom{m+n}{n}
\]
counts the lattice paths from $(0,0)$ to $(m,n)$ that stay weakly above the main diagonal of the $m \times n$ rectangle \cite{bizley1954derivation}. The Fibonacci analog, or the FiboCatalan number, is defined as
\[
    \fibocat{m}{n} := \frac{1}{F_{m+n}}\fibonom{m+n}{n} = \frac{F_{m+n-1}^!}{F_m^!\, F_n^!}.
\]
We then define the \emph{$q$-FiboCatalan number}
\[
    \qfibocat{m}{n} := \frac{1}{[F_{m+n}]_q}\qfibonom{m+n}{n} = \frac{[F_{m+n-1}]^!_q}{[F_m]^!_q\, [F_n]^!_q}.
\]
Bergeron--Ceballos--K\"ustner proved that this rational expression is in fact a polynomial with integral coefficients when $\gcd(m,n) \in \{1,2\}$, and conjectured non-negativity based on computational evidence.
 
\begin{proposition}[{\cite[Proposition 4.2]{kustner}}]\label{prop:fibocatintegral}
    If $\gcd(m,n) \in \{1,2\}$, then $\qfibocat{m}{n} \in \mathbb{Z}[q]$.
\end{proposition}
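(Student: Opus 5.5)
The aim is to show that $\qfibocat{m}{n}=[F_{m+n-1}]^!_q/([F_m]^!_q[F_n]^!_q)$ lies in $\mathbb{Z}[q]$ when $\gcd(m,n)\in\{1,2\}$. I would argue through cyclotomic multiplicities. Every $[N]_q=\prod_{d\mid N,\,d>1}\Phi_d(q)$ is a product of cyclotomic polynomials, which are monic with integer coefficients. So the quotient is a polynomial in $\mathbb{Z}[q]$ exactly when, for every $d>1$, the multiplicity of $\Phi_d$ in the numerator is at least its multiplicity in the denominator. Since $[F_{m+n}]_q^!/([F_m]_q^![F_n]_q^!)=\qfibonom{m+n}{n}$ is already a polynomial (by the tiling theorem of \cite{kustner}), I would write
\[
\qfibocat{m}{n}=\qfibonom{m+n}{n}\big/[F_{m+n}]_q .
\]
It then suffices to show that each $\Phi_d$ with $d\mid F_{m+n}$, $d>1$, divides $\qfibonom{m+n}{n}$.

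The multiplicity of $\Phi_d$ in $[F_k]_q$ is $1$ if $d\mid F_k$ and $0$ otherwise. I would use the standard rank of apparition $z(d)$: $d\mid F_k$ if and only if $z(d)\mid k$. This follows from $\gcd(F_a,F_b)=F_{\gcd(a,b)}$, which strengthens \Cref{prop:identities}(4). Consequently the multiplicity of $\Phi_d$ in $\qfibonom{m+n}{n}$ is
\[
\left\lfloor \tfrac{m+n}{z}\right\rfloor-\left\lfloor \tfrac{m}{z}\right\rfloor-\left\lfloor \tfrac{n}{z}\right\rfloor,\qquad z=z(d),
\]
which is the familiar $0/1$ quantity from the Gaussian case. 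We need this to equal $1$ whenever $z\mid m+n$. If $z\mid m+n$, the expression equals $1$ unless $z\mid m$ and $z\mid n$, in which case it is $0$. That bad case forces $z\mid\gcd(m,n)\in\{1,2\}$, so $z\in\{1,2\}$.

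This is the step I expect to be the main obstacle: ruling out $z(d)\le 2$, and it is exactly where the hypothesis $\gcd(m,n)\in\{1,2\}$ and the condition $d>1$ enter. Here $z(d)\in\{1,2\}$ means $d\mid F_1=1$ or $d\mid F_2=1$, so $d=1$, which is excluded. Thus every $\Phi_d$ in $[F_{m+n}]_q$ appears in $\qfibonom{m+n}{n}$, and the quotient is a product of cyclotomic polynomials, hence in $\mathbb{Z}[q]$. The argument would also go through for $\gcd(m,n)$ divisible only by $1,2$. I would double-check the claim $d\mid F_k\iff z(d)\mid k$ via the gcd identity, proved by the usual Euclidean argument from $F_{a+b}=F_{a+1}F_b+F_aF_{b-1}$.
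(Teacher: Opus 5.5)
The paper gives no proof of this proposition; it is quoted from \cite{kustner}, so there is no in-paper argument to compare against. Judged on its own, your proof is correct and complete. For $d>1$ and $z=z(d)$, the exponent of $\Phi_d$ in $\qfibocat{m}{n}$ is $\lfloor\frac{m+n}{z}\rfloor-\lfloor\frac{m}{z}\rfloor-\lfloor\frac{n}{z}\rfloor$, minus $1$ if $z\mid m+n$. This exponent is negative exactly when $z\mid m$ and $z\mid n$. Under the hypothesis $\gcd(m,n)\in\{1,2\}$, that forces $d\mid F_1$ or $d\mid F_2$, hence $d=1$, which is excluded.

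Two small remarks. First, you do not need the tiling theorem. The floor expression is always nonnegative, so your computation already shows $\qfibonom{m+n}{n}\in\mathbb{Z}[q]$, and the argument is self-contained apart from $\gcd(F_a,F_b)=F_{\gcd(a,b)}$. Second, your closing remark about $\gcd(m,n)$ being ``divisible only by $1,2$'' just restates the hypothesis. The real bonus of your method is the converse. If $g=\gcd(m,n)\ge 3$, then $F_g\ge 2$, and any $d>1$ dividing $F_g$ has $z(d)\mid g$. So $\Phi_d$ occurs with exponent $-1$, and $\qfibocat{m}{n}\notin\mathbb{Z}[q]$. The hypothesis is therefore also necessary.

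For comparison, a shorter route mirrors the classical proof for rational $q$-Catalan numbers and avoids computing exponents. Both $[F_{m+n}]_q\,\qfibocat{m}{n}=\qfibonom{m+n}{n}$ and $[F_m]_q\,\qfibocat{m}{n}=\qfibonom{m+n-1}{n}$ lie in $\mathbb{Z}[q]$. Also $\gcd\bigl([F_{m+n}]_q,[F_m]_q\bigr)=[F_{\gcd(m,n)}]_q=1$ in $\mathbb{Q}[q]$. B\'ezout then places $\qfibocat{m}{n}$ in $\mathbb{Q}[q]$. Dividing the integer polynomial $\qfibonom{m+n}{n}$ by the monic $[F_{m+n}]_q$ then gives integer coefficients. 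That route uses the same Fibonacci gcd identity and treats polynomiality of the $q$-Fibonomials as a black box. Yours needs slightly more bookkeeping, but it yields exact multiplicities, and hence the converse, without appealing to the tiling model.
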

 
We show that \Cref{conj:mbynunimodal} implies this observed non-negativity.
 
\begin{proposition}\label{prop:fibocat-nonneg}
Let $m, n \geq 1$ with $\gcd(m,n) \in \{1,2\}$. If $\qfibonom{m+n}{n}$ is unimodal, then $\qfibocat{m}{n} \in \mathbb{Z}_{\geq 0}[q]$.
\end{proposition}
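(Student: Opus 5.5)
Write $P(q)=\qfibonom{m+n}{n}$ and $N=[F_{m+n}]_q$, so that $P = N\cdot C$ with $C=\qfibocat{m}{n}\in\mathbb{Z}[q]$ by \Cref{prop:fibocatintegral}. Since $N=(1-q^{F})/(1-q)$ with $F=F_{m+n}$, we have
\[
C(q)=P(q)\cdot\frac{1-q}{1-q^{F}}=(1-q)P(q)\sum_{j\ge 0}q^{jF}
\]
as a formal power series. Writing $P=\sum_{k}p_kq^k$ and $d_k=p_k-p_{k-1}$ (with $p_{-1}=0$), the coefficient of $q^t$ in $C$ is $\sum_{j\ge0}d_{t-jF}$. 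Equivalently, it is the telescoping partial sum of differences along the residue class of $t$ modulo $F$, taken over the positions $t, t-F, t-2F,\dots$ that are at least $0$. The plan is to show this sum is nonnegative for every $t\le \deg C$.

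The key steps are as follows. First, compute degrees. Both $P$ and $N$ are symmetric (palindromic), so $C$ is palindromic of degree $D_C=\deg P-(F-1)$. Hence it suffices to check nonnegativity of $[q^t]C$ for $t\le D_C/2$, which is $\le \deg P/2 - (F-1)/2$.

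Second, locate the relevant differences. For such $t$, every index $t-jF\ge0$ appearing in the sum satisfies $t-jF\le t\le \deg P/2$. Since $P$ is symmetric and unimodal with center $\deg P/2$, its mode can be taken at $M=\lfloor \deg P/2\rfloor$. Therefore every $d_k$ with $0\le k\le \deg P/2$ is nonnegative.

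Third, conclude. Since $[q^t]C$ is a sum of such nonnegative $d_k$, it is nonnegative for $t\le D_C/2$. By palindromicity of $C$, all of its coefficients are nonnegative, and together with integrality from \Cref{prop:fibocatintegral} this gives $C\in\mathbb{Z}_{\ge0}[q]$.

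The main obstacle is verifying that the power-series identity is consistent with $C$ being an honest polynomial, and that palindromicity genuinely transfers to $C$. Both follow because $C=P/N$ is a polynomial by \Cref{prop:fibocatintegral}, and a quotient of palindromic polynomials is palindromic: $q^{\deg P}P(1/q)=P$ and $q^{F-1}N(1/q)=N$ together give $q^{D_C}C(1/q)=C$. A secondary point is the edge case where $\deg P$ is odd, so the symmetric unimodal sequence has two equal middle terms. The inequality $t\le \deg P/2$ still keeps every index weakly left of the peak, so each $d_k\ge0$ there and no difficulty arises.
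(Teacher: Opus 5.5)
Your proposal is correct and follows essentially the same route as the paper. You expand $(1-q)/(1-q^{F_{m+n}})$ as a power series, so that each coefficient of the $q$-FiboCatalan polynomial becomes a sum of differences $a_{i-kF_{m+n}}-a_{i-kF_{m+n}-1}$. You then use symmetry to restrict to indices at most $\lfloor d/2\rfloor$, where unimodality makes every such difference nonnegative. Your extra details, namely why the quotient of palindromic polynomials is palindromic and why a symmetric unimodal sequence is weakly increasing up to $\lfloor d/2\rfloor$, are valid and simply make explicit what the paper leaves implicit.
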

 
\begin{proof}
Write $\qfibocat{m}{n} = \sum_{i=0}^D c_i q^i$ and $\qfibonom{m+n}{n} = \sum_{i=0}^d a_i q^i$, where $d = \deg \qfibonom{m+n}{n}$. Symmetry of $\qfibonom{m+n}{n}$ and $[F_{m+n}]_q$ together with \Cref{prop:fibocatintegral} implies symmetry of $\qfibocat{m}{n}$, so it suffices to show $c_i \geq 0$ for $0 \leq i \leq \lfloor \frac{D}{2} \rfloor$.
 
From
\[
\qfibocat{m}{n} = \frac{1-q}{1-q^{F_{m+n}}}\, \qfibonom{m+n}{n} = \sum_{k \geq 0}(1-q)\, q^{kF_{m+n}} \cdot \sum_{i=0}^d a_i q^i,
\]
we read off (with the convention $a_{-r} = 0$ for $r \geq 1$)
\[
c_i = \sum_{k \geq 0}\bigl(a_{i-kF_{m+n}} - a_{i-kF_{m+n}-1}\bigr).
\]
For $i \leq \lfloor \frac{D}{2} \rfloor \leq \lfloor \frac{d}{2} \rfloor$ and any $k \geq 0$, we have $i - kF_{m+n} \leq \lfloor \frac{d}{2} \rfloor$, so unimodality of $\qfibonom{m+n}{n}$ gives $a_{i-kF_{m+n}} - a_{i-kF_{m+n}-1} \geq 0$. Hence $c_i \geq 0$.
\end{proof}
 
We close with restating two open problems from \cite{kustner}.
 
\begin{problem}[{\cite[Open Problem 4.3]{kustner}}]
For $m, n \in \mathbb{Z}_{>0}$ with $\gcd(m,n) \in \{1,2\}$, find a combinatorial interpretation of $\qfibocat{m}{n}$.
\end{problem}

The Catalan case also remains open.

\begin{problem}[{\cite[Open Problem 5.4]{kustner}}]
For $m \geq 0$, find a combinatorial interpretation of $\qfibocat{m}{m+1}$.
\end{problem}

\bibliographystyle{alpha}
\bibliography{references}

@article{kustner,
  author = {Bergeron, Nantel and Ceballos, Cesar and Küstner, Josef},
  issn = {1815-0659},
  journal = {Symmetry, Integrability and Geometry: Methods and Applications},
  language = {eng},
  organization = {University of Vienna, Austria},
  title = {{Elliptic and $q$-Analogs of the Fibonomial Numbers}},
  year = {2020},
}

@book{benjamin,
  author = {Benjamin, Arthur and Quinn, Jennifer J.},
  address = {Washington, D.C.},
  isbn = {0883853337},
  language = {eng},
  lccn = {2003108524},
  publisher = {Mathematical Association of America},
  series = {Dolciani Mathematical Expositions; no.~27},
  title = {{Proofs That Really Count: The Art of Combinatorial Proof}},
  year = {2003},
}

@incollection{zeilberger1989one,
  title = {{A One-Line High School Algebra Proof of the Unimodality of the Gaussian Polynomials $\genfrac{[}{]}{0pt}{}{n}{k}$ for $k < 20$}},
  author = {Zeilberger, Doron},
  booktitle = {$q$-Series and Partitions},
  pages = {67--72},
  year = {1989},
  publisher = {Springer},
}

@article{sylvester1878xxv,
  title = {{XXV. Proof of the Hitherto Undemonstrated Fundamental Theorem of Invariants}},
  author = {Sylvester, James Joseph},
  journal = {The London, Edinburgh, and Dublin Philosophical Magazine and Journal of Science},
  volume = {5},
  number = {30},
  pages = {178--188},
  year = {1878},
  publisher = {Taylor \& Francis},
}

@article{o1990unimodality,
  title = {{Unimodality of Gaussian Coefficients: A Constructive Proof}},
  author = {O'Hara, Kathleen M.},
  journal = {Journal of Combinatorial Theory, Series A},
  volume = {53},
  number = {1},
  pages = {29--52},
  year = {1990},
  publisher = {Academic Press},
}

@article{sagan2009combinatorial,
  title = {{Combinatorial Interpretations of Binomial Coefficient Analogues Related to Lucas Sequences}},
  author = {Sagan, Bruce and Savage, Carla},
  journal = {arXiv preprint arXiv:0911.3159},
  year = {2009},
}

@article{zeckendorf1972representations,
  title = {{Représentation des nombres naturels par une somme de nombres de Fibonacci ou de nombres de Lucas}},
  author = {Zeckendorf, Édouard},
  journal = {Bulletin de la Société Royale des Sciences de Liège},
  pages = {179--182},
  year = {1972},
}

@article{adiprasito2018hodge,
  title = {{Hodge Theory for Combinatorial Geometries}},
  author = {Adiprasito, Karim and Huh, June and Katz, Eric},
  journal = {Annals of Mathematics},
  volume = {188},
  number = {2},
  pages = {381--452},
  year = {2018},
  publisher = {JSTOR},
}

@article{branden2020lorentzian,
  title = {{Lorentzian Polynomials}},
  author = {Brändén, Petter and Huh, June},
  journal = {Annals of Mathematics},
  volume = {192},
  number = {3},
  pages = {821--891},
  year = {2020},
  publisher = {Department of Mathematics, Princeton University},
}

@inproceedings{anari2018log,
  title = {{Log-Concave Polynomials, Entropy, and a Deterministic Approximation Algorithm for Counting Bases of Matroids}},
  author = {Anari, Nima and Gharan, Shayan Oveis and Vinzant, Cynthia},
  booktitle = {2018 IEEE 59th Annual Symposium on Foundations of Computer Science (FOCS)},
  pages = {35--46},
  year = {2018},
  organization = {IEEE},
}

@article{chan2024log,
  title = {{Log-Concave Poset Inequalities}},
  author = {Chan, Swee Hong and Pak, Igor},
  journal = {Journal of the Association for Mathematical Research},
  volume = {2},
  number = {1},
  pages = {53--153},
  year = {2024},
}

@article{stanley1989log,
  title = {{Log-Concave and Unimodal Sequences in Algebra, Combinatorics, and Geometry}},
  author = {Stanley, Richard P.},
  journal = {Annals of the New York Academy of Sciences},
  volume = {576},
  number = {1},
  pages = {500--535},
  year = {1989},
}

@article{bizley1954derivation,
  title = {{Derivation of a New Formula for the Number of Minimal Lattice Paths from $(0,0)$ to $(km,kn)$ Having Just $t$ Contacts with the Line $my=nx$ and Having No Points above This Line; and a Proof of Grossman's Formula for the Number of Paths Which May Touch but Do Not Rise above This Line}},
  author = {Bizley, Michael Terence Lewis},
  journal = {Journal of the Institute of Actuaries},
  volume = {80},
  number = {1},
  pages = {55--62},
  year = {1954},
  publisher = {Cambridge University Press},
}

@article{lucas1878theorie,
  title = {{Théorie des fonctions numériques simplement périodiques}},
  author = {Lucas, Édouard},
  journal = {American Journal of Mathematics},
  volume = {1},
  pages = {184--240},
  year = {1878},
  publisher = {JSTOR},
}

\end{document}